\definecolor{darkblue}{HTML}{0000FF}
\newtheorem{Co}{Corollary}
\newtheorem{Lem}{Lemma}
\newtheorem{Prop}{Proposition}
\newtheorem{Thm}{Theorem}
\theoremstyle{remark} 
\newtheorem{Def}{Definition}
\newtheorem{Rem}{Remark}
\def\esssup{\operatornamewithlimits{esssup}}
\renewcommand{\geq}{\geqslant}
\renewcommand{\leq}{\leqslant}
\newcommand{\yes}{\ding{51}}
\newcommand{\no}{\ding{55}}
\newcommand{\1}{\mathbbm 1}
\newcommand{\R}{\mathbb R}
\newcommand{\bgamma}{{\boldsymbol{\gamma}}}
\newcommand{\balpha}{{\boldsymbol{\alpha}}}
\newcommand{\nset}{\mathbb{N}}
\newcommand{\requ}{\sigma}
\newcommand{\splinespace}{\mathcal{S}}
\newcommand{\lbeta}{\lfloor\beta\rfloor}
\newcommand{\A}{\mathcal A}
\renewcommand{\H}{\mathcal H}
\renewcommand{\L}{\mathcal L}
\newcommand{\NN}{\mathsf{NN}}
\newcommand{\W}{\mathcal W}
\newcommand{\Z}{\mathcal Z}
\newcommand{\rmd}{\mathrm{d}}
\newcommand\eps\varepsilon
\newcommand{\rset}{\mathbb{R}}
\begin{document}

\begin{frontmatter}

\title{Simultaneous approximation of a smooth function\\and its derivatives by deep neural networks\\ with piecewise-polynomial activations}

\author[a]{Denis Belomestny\corref{corr}}
\cortext[corr]{Corresponding author}
\ead{denis.belomestny@uni-due.de}

\author[b]{Alexey Naumov}
\ead{anaumov@hse.ru}

\author[b,c]{Nikita Puchkin}
\ead{npuchkin@hse.ru}

\author[b]{Sergey Samsonov}
\ead{svsamsonov@hse.ru}

\address[a]{Duisburg-Essen University, Germany}
\address[b]{HSE University, Russia}
\address[c]{Institute for Information Transmission Problems RAS, Russia}

\begin{abstract}
This paper investigates the approximation properties of deep neural networks with  piecewise-polynomial activation functions.  We derive the required depth, width, and sparsity of a deep neural network to approximate any H\"{o}lder smooth function up to a given approximation error in H\"{o}lder norms in such a way that all weights of this neural network are bounded by $1$. The latter feature is essential to control generalization errors in many statistical and machine learning applications.
\end{abstract}

\begin{keyword}
deep neural networks \sep approximation complexity \sep ReQU activations \sep $\rm{ReLU}^k$ activations \sep H\"{o}lder class.
\MSC[2020] 41A25 \sep  68T07
\end{keyword}

\end{frontmatter}


\section{Introduction}
\label{sec:intro}
Neural networks have recently gained much attention due to their impressive performance in many complicated practical tasks, including image processing \cite{lecun15}, generative modelling \cite{goodfellow14}, reinforcement learning \cite{mnih15}, numerical solution of PDEs, e.g., \cite{han18, geist21}, and optimal control \cite{chen19, onken22}. This makes them extremely useful in design of self-driving vehicles \cite{li18} and robot control systems, e.g., \cite{cembrano94, bozek17, gonzalez22}. One of the reasons for such a success of neural networks is their expressiveness, that is, the ability to approximate functions with any desired accuracy. The question of expressiveness of neural networks has a long history and goes back to the papers \cite{funahashi89, cybenko89, hornik91}.
In particular, in \cite{cybenko89}, the author showed that one hidden layer is enough to approximate any continuous function $f$ with any prescribed accuracy $\eps > 0$. However, further analysis revealed the fact that deep neural networks may require much fewer parameters than the shallow ones to approximate $f$ with the same accuracy. Many efforts were put in recent years to understand the fidelity of deep neural networks. 
In a pioneering work \cite{yarotsky17}, the author showed that for any target function $f$ from the Sobolev space $\W^{n, \infty}([0, 1]^d)$ there is a neural network with $O(\eps^{-d/n})$ parameters and ReLU activation function, that approximates $f$ within the accuracy $\eps$ with respect to the $L_\infty$-norm on the unit cube $[0, 1]^d$. Further works in this direction considered various smoothness classes of the target functions \cite{li20b,guhring21,lu21,shen21}, neural networks with diverse activations \cite{guhring21,deryck21,jiao21,langer21}, domains of more complicated shape \cite{shen20}, and measured the approximation errors with respect to different norms \cite{yarotsky17,guhring21,deryck21,schmidt-hieber20}.  Several authors also considered the expressiveness of neural networks with different architectures. This includes wide neural networks of logarithmic \cite{yarotsky17,guhring21,schmidt-hieber20} or even constant depth \cite{li20a,li20b,deryck21,shen21}, or deep and narrow networks \cite{hanin19,kidger20,park21}. Most of the existing results on the expressiveness of neural networks measure the quality of approximation with respect to either the $L_\infty$- or $L_p$-norm, $p \geq 1$. Much fewer papers study the approximation of derivatives of smooth functions. These rare exceptions include \cite{guhring20, guhring21, deryck21}.
\par
In the present paper, we focus on feed-forward neural networks with  piecewise-polynomial activation functions of the form $\sigma^{\mathsf{ReQU}}(x) = (x \vee 0)^2$.
Neural networks with such activations are known to successfully approximate smooth functions from the Sobolev and Besov spaces with respect to the $L_\infty$- and $L_p$-norms (see, for instance, \cite{klusowski18, li20a, li20b, ali21, abdeljawad22, chen22, gribonval22, siegel22}). We continue this line of research and study the ability of such neural networks to approximate not only smooth functions themselves but also their derivatives. We derive the non-asymptotic upper bounds on the H\"{o}lder norm between the target function and its approximation from a class of sparsely connected neural networks with ReQU activations. In particular, we show that, for any $f$ from a H\"{o}lder ball $\H^\beta([0,1]^d, H)$, $H > 0$, $\beta > 2$, (see \Cref{sec:notations} for the definition) and any $\eps > 0,$ there exists a neural network with ReQU activation functions, that uniformly approximates the target function in the norms of the H\"{o}lder spaces $\H^{\ell}([0,1]^d)$ for all $\ell \in \{0,\ldots,\lfloor \beta \rfloor\}$. Here and further in the paper, $\lfloor \beta \rfloor$ stands for the largest integer which is \emph{strictly smaller} than $\beta$. A simplified statement of our main result is given below. 

\begin{Thm}[simplified version of \Cref{prop:requ_approx}]
\label{prop:main}
	Fix  $\beta > 2$ and $p, d \in \mathbb N$.
	Then, for any $H > 0$, for any $f : [0, 1]^d \rightarrow \R^p$, $f \in \H^\beta([0,1]^d, H)$ and any integer $K \geq 2$, there exists a neural network $h_f : [0, 1]^d \rightarrow \R^p$ with ReQU activation functions  such that it has $O(\log d + \lbeta + \log \log H)$ layers,  at most $O(p \vee d(K+\lbeta)^{d})$ neurons in each layer and $O(p (d\beta + d^2 + \log\log H) (K+\lbeta)^{d})$ non-zero weights taking their values in $[-1, 1]$. Moreover, it holds that
    \[
        \left\|f - h_f\right\|_{\H^\ell([0, 1]^d)} \leq \frac{C^{\beta d} H \beta^\ell}{K^{\beta-\ell}}
	    \quad
	    \text{for all $\ell \in \{0, \ldots, \lfloor\beta\rfloor\}$,}
    \]
where $C$ is a universal constant. 
\end{Thm}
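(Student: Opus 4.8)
The plan is to follow the classical ``local Taylor polynomial plus partition of unity'' route to approximation in H\"older norm, and then to realize the resulting piecewise-polynomial approximant \emph{exactly} by a ReQU network, the only genuine difficulty being to do so with every weight confined to $[-1,1]$. I would fix an integer resolution $N\asymp K$, cover $[0,1]^d$ by the regular mesh of cells of side $1/N$ with nodes $x_\nu$, $\nu\in\{0,\dots,N\}^d$, and attach to each node the Taylor polynomial $P_\nu$ of $f$ of degree $\lbeta$; since $f\in\H^\beta([0,1]^d,H)$, the Whitney / Bramble--Hilbert estimate gives $\|f-P_\nu\|_{\H^\ell(\mathrm{cell}_\nu)}\lesssim H\beta^\ell N^{-(\beta-\ell)}$ for every $\ell\le\lbeta$, the factor $\beta^\ell$ reflecting the number of multi-indices of order $\le\ell$. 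These local polynomials are then glued by a partition of unity $\{\varphi_\nu\}$ \emph{subordinate to the mesh}, of class $C^{\lbeta}$, with $\sum_\nu\varphi_\nu\equiv1$ on $[0,1]^d$ and $\|\varphi_\nu\|_{\H^\ell}\lesssim(CN)^\ell$: the standard choice is a tensor product of univariate cardinal B-splines of an even degree $q\asymp\lbeta$, $q\ge\lbeta+1$, on the mesh of width $1/N$, which has exactly $N+q$ nonzero members per axis. Writing $\tilde f:=\sum_\nu\varphi_\nu P_\nu$, the Leibniz rule together with the two estimates above yields $\|f-\tilde f\|_{\H^\ell([0,1]^d)}\le C^{\beta d}H\beta^\ell K^{-(\beta-\ell)}$ for every $\ell\le\lbeta$, which is the asserted bound; the mesh constants and binomial factors from Leibniz are all absorbed into $C^{\beta d}$.

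The second step is exact representation of $\tilde f$ by a ReQU network. I would use the elementary building blocks: (i) $x^2=\requ(x)+\requ(-x)$ exactly, hence the bilinear map $(x,y)\mapsto xy=\tfrac14\bigl(\requ(x+y)+\requ(-x-y)-\requ(x-y)-\requ(-x+y)\bigr)$ exactly in one hidden layer with weights in $\{-1,-\tfrac14,\tfrac14,1\}$; (ii) by composing such products, any monomial of degree $\le\lbeta$ in $d$ variables, then any polynomial of degree $\le\lbeta$, exactly, in $O(\lbeta)$ layers; (iii) the truncated power $(x-t)_+^2=\requ(x-t)$, and, by iterated squaring via (i), $(x-t)_+^{q}$ for even $q$, from which a univariate degree-$q$ B-spline on the mesh is an exact finite linear combination, and its $d$-fold tensor products are assembled through the product trees of (i)--(ii) in depth $O(\log d)$. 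Because $\varphi_\nu$ has degree $q$ and $P_\nu$ degree $\lbeta$, the product $\tilde f$ is itself a tensor-product spline of degree $q+\lbeta$ on the same mesh, i.e.\ $\tilde f=\sum_\mu c_\mu B_\mu$ over the $(K+O(\lbeta))^d$ tensor B-splines $B_\mu$, with each coefficient $c_\mu$ a known (for the designer) finite linear combination of the rescaled derivatives $\partial^\alpha f(x_\nu)/\alpha!$ near $\mu$. Hence the network need only: shift and contract the input to each cell, $u=N(x-x_\nu)$ (weights $\le1$); evaluate the $(K+O(\lbeta))^d$ tensor B-splines in parallel; and output $\sum_\mu c_\mu B_\mu$. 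A direct count gives $O(\log d+\lbeta)$ layers, $O(p\vee d(K+\lbeta)^d)$ neurons per layer, and $O(p(d\beta+d^2)(K+\lbeta)^d)$ connections --- matching every term of the claim except the $\log\log H$ contributions.

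The main obstacle, and the source of the $\log\log H$ terms, is the constraint $|w|\le1$: the coefficients $c_\mu$ involve derivatives of $f$, bounded only by $H$, together with combinatorial constants, so a naive linear read-out would need weights of magnitude $\asymp C^{\beta d}H$. The contraction $u=N(x-x_\nu)$ and the translations already remove all powers of the mesh width and all purely dimensional constants (into $C^{\beta d}$); what remains is to manufacture, inside the network, a multiplicative constant of magnitude up to $H$ without any weight exceeding $1$. I would do this with a dedicated gadget that iterates the exact squaring map $x\mapsto x^2$ of (i): starting from a seed of size $2$ (obtainable from admissible biases), $m$ squaring layers produce $2^{2^m}$; choosing $m=O(\log\log H)$ so that $2^{2^m}$ dominates $\max_\mu|c_\mu|$, one writes $c_\mu=c_\mu'\,2^{2^m}$ with $|c_\mu'|\le1$, feeds $c_\mu'$ as an ordinary weight, and multiplies the corresponding B-spline value by the synthesized factor $2^{2^m}$ through one product node (i), all weights $\pm1$ or bounded by $1$. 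Carrying this amplification sub-chain along each of the $p(K+\lbeta)^d$ output branches adds the $O(\log\log H)$ layers to the depth and the multiplicative $\log\log H$ to the sparsity, giving the stated $O(\log d+\lbeta+\log\log H)$ depth and $O(p(d\beta+d^2+\log\log H)(K+\lbeta)^d)$ nonzero weights. The points that require the most care are the Leibniz-rule bookkeeping behind the $\beta^\ell$ factor and the verification that the ranges of all intermediate quantities are compatible with the squaring trick; everything else is routine composition of the gadgets above.
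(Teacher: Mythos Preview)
Your overall architecture---approximate $f$ by a piecewise polynomial, realize it exactly with ReQU product gadgets, and manufacture the single large constant $H$ by iterated squaring---is the right one, and the amplification gadget you describe is exactly \Cref{lem:requ_mult} in the paper. The error analysis via Leibniz is also fine. But the route you chose for the \emph{approximant} differs from the paper's and, as written, overshoots the stated complexity.

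The paper does \emph{not} use Taylor polynomials plus a partition of unity. It takes $h_f$ to be the B-spline \emph{quasi-interpolant} $\mathcal Q_d^{\lbeta,K}f=\sum_{j_1,\dots,j_d}(\L_{j_1,\dots,j_d}f)\prod_i N_{j_i}^{\lbeta,K}(x_i)$ of degree exactly $\lbeta$ on simple knots $\{j/K\}$, so that the spline space has dimension $(K+\lbeta)^d$ on the nose. The dual-functional bound $|\L_{j_1,\dots,j_d}f|\le(2\lbeta+1)^d 9^{d(\lbeta-1)}\|f\|_\infty$ (Schumaker) controls the coefficients, and the approximation bound in $\H^\ell$ is obtained by inserting local Taylor polynomials (which $\mathcal Q_d$ reproduces) and invoking the B-spline derivative bounds. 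On the network side the paper computes the $B_j^{\lbeta,K}$ via the de~Boor--Cox recursion (\Cref{lem:b-spline_representation}) rather than truncated powers, passing the integer $K$ \emph{through the network} as a value (obtained as a sum of $K$ ones) so that every coefficient of the form $1/(a_{j+m+1}-a_j)\le K$ is realized as a product with the input $K$, never as a weight.

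Your construction $\tilde f=\sum_\nu\varphi_\nu P_\nu$ produces a piecewise polynomial of degree $q+\lbeta$ that is only $C^{q-1}$ at the knots; as a spline this means knots of multiplicity $\lbeta+1$, so the tensor B-spline basis you need has $\bigl((\lbeta+1)(K-1)+q+\lbeta+1\bigr)^d\asymp(K\lbeta)^d$ elements, not $(K+O(\lbeta))^d$. Equivalently, implementing $\sum_\nu\varphi_\nu P_\nu$ directly forces you to carry $\binom{\lbeta+d}{d}$ monomial channels through the $(K+q)^d$ B-spline channels. Either way you pick up an extra factor $\lbeta^{\Theta(d)}$ in width and sparsity, which is not the $O(d\beta+d^2+\log\log H)$ prefactor of the statement. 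The quasi-interpolant avoids this precisely because it lands in the minimal degree-$\lbeta$ spline space.

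A second, smaller gap: the step ``$u=N(x-x_\nu)$ (weights $\le1$)'' cannot be done with a single weight since $N\ge2$. The paper's fix is to synthesize $K$ once (one layer of width $K$ summing ones) and then multiply via the product gadget; your truncated-power route would likewise have to synthesize the $K^{q+1}$-sized divided-difference coefficients, which you do not address. This is repairable with the same sum-then-multiply trick, but you should say so, and it is another reason the recursion is cleaner than truncated powers here.
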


We provide explicit expressions for the hidden constants in \Cref{prop:requ_approx}.
The main contributions of our work can be summarized as follows.
\begin{itemize}
    \item Given a smooth target function $f \in \H^\beta([0,1]^d, H)$, we construct a neural network, that \emph{simultaneously} approximates all the derivatives of $f$ up to order $\lfloor\beta\rfloor$ with optimal dependence of the precision on the number of non-zero weights. That is, if we denote the number of non-zero weights in the network by $N$, then it holds that $\left\|f - h_f\right\|_{\H^\ell([0, 1]^d)} = O(N^{-(\beta-\ell)/d})$ simultaneously for all $\ell \in \{0, \ldots, \lfloor\beta\rfloor\}.$ 
    \item The constructed neural network has almost the same smoothness as the target function itself while approximating it with the optimal accuracy. This property turns out to be very useful in many applications including the approximation of PDEs and density transformations where we need to use derivatives of the approximation. 
    \item The weights of the approximating neural network are bounded in absolute values by $1$. The latter property plays a crucial role in deriving bounds on the generalization error of empirical risk minimizers in terms of the covering number of the corresponding parametric class of neural networks. Note that the upper bounds on the weights provided in \cite{guhring20, guhring21, deryck21} blow up as the approximation error decreases.
\end{itemize}
The rest of the paper is organized as follows. In \Cref{sec:notations}, we introduce necessary definitions and notations. \Cref{sec:main_result} contains the statement of our main result, \Cref{prop:requ_approx}, followed by a detailed comparison with the existing literature. We then present numerical experiments in \Cref{sec:numerical}. The proofs are collected in \Cref{sec:proofs}. Some auxiliary facts are deferred to \ref{app:splines}.
 
\section{Preliminaries and notations}
\label{sec:notations}
\paragraph*{Norms}
For a matrix $A$ and a vector $v$, we denote by $\|A\|_\infty$ and $\|v\|_\infty$  the maximal 
absolute value of entries of $A$ and $v$, respectively.
$\|A\|_0$ and $\|v\|_0$ shall stand for the number of non-zero entries of $A$ and $v$, respectively. Finally, the Frobenius norm and operator norm of $A$ are denoted by $\|A\|_F$ and 
$\|A\|$, respectively, and the Euclidean norm of $v$ is denoted by $\|v\|$. For a function $f : \Omega\to \mathbb{R}^d$, we set 
\begin{align*}
	&
	\|f\|_{L_\infty(\Omega)} = \esssup_{x\in \Omega} \|f(x)\|,
	\\&
	\|f\|_{L_p(\Omega)} =  \left\{\int_{\Omega}\|f(x)\|^p\,\rmd x\right\}^{1/p},
	\quad
	p \geq 1.
\end{align*}
If the domain $\Omega$ is clear from the context, we simply write $L_\infty$ or $L_p$, instead of $L_\infty(\Omega)$ or $L_p(\Omega)$, respectively.

\paragraph*{Smoothness classes}
Let $\Omega \subseteq \rset^{d}$, $f: \Omega \mapsto \rset^{m}$. For a multi-index $\bgamma = (\gamma_1,\dots,\gamma_d) \in 
\mathbb{N}_0^d$, we write $|\bgamma| = \sum_{i=1}^d \gamma_i$ and define the corresponding partial differential operator $D^{\bgamma}$ as
\[
D^{\bgamma}f_i = \frac{\partial^{|\bgamma|} f_i}{\partial x_1^{\gamma_1} \cdots \partial x_d^{\gamma_d}}, \quad i \in \{1,\dots, m\}\,, \text{ and } \|D^{\bgamma} f\|_{L_\infty(\Omega)} = \max\limits_{1 \leq i \leq m} \|D^{\bgamma} f_i\|_{L_\infty(\Omega)}\,.
\]
For $s \in \mathbb N$, the function space $C^{s}(\Omega)$ consists of those 
functions over the domain $\Omega$ which have bounded and continuous derivatives up to order $s$ in $\Omega$. Formally,
\[
	\textstyle{C^s(\Omega) = \big\{ f: \Omega \to \R^m: \quad \|f\|_{C^s} := \max\limits_{ 
	|\bgamma| \leq s} \|D^{\bgamma} f\|_{L_\infty(\Omega)} < \infty \big\},} 
\]
For a function $f:\Omega\to\R^m$ and any positive number $0 < \delta \leq 1$, the
\emph{H\"older constant} of order $\delta$ is given by
\begin{equation}\label{beta}
[f]_{\delta}:=\max_{i \in \{1,\ldots,m\}}\sup_{x \not = y\in\Omega}\frac{|f_i(x)-f_i(y)|}{\min\{1, 
\|x-y\|\}^{\delta}}\;.
\end{equation} 
Now, for any $\beta >0$, we can define the \emph{H\"older ball} 
$\H^{\beta}(\Omega,H)$. If we let $s = \lbeta$ be the largest integer \emph{strictly less} than $\beta$, $\H^{\beta}(\Omega, H)$ contains all functions in $C^s(\Omega)$ with $\delta$-H\"older-continuous, $\delta = \beta - s > 0$, partial derivatives of order $s$. Formally,
\[
\textstyle{
	\H^{\beta}(\Omega,H) = \big\{ f \in C^s(\Omega): \quad \|f\|_{\H^{\beta}} := \max \{ 
	\|f\|_{C^s}, \ \max\limits_{|\bgamma| = s} [D^{\bgamma}f]_{\delta} \} \leq H \big\}.}
\]
We also write $f\in \H^{\beta}(\Omega)$ if  $f \in \H^{\beta}(\Omega,H)$ for some $H<\infty.$

\paragraph*{Neural networks}
Fix an activation function 
$\sigma: \R \rightarrow \R$. For a  vector $v= (v_1,\dots,v_p) \in \R^p$,
we define the shifted activation function $\sigma_v: \R^p \rightarrow \R^p$ as 
\begin{equation*}
	\sigma_{v}(x) = \bigl(\sigma(x_1-v_1),\dots,\sigma(x_p-v_p)\bigr), \quad x = 
	(x_1,\dots,x_p) 
	\in 
	\R^p.
\end{equation*}
Given a positive integer $L$ and a vector $\A = (p_0, p_1, \dots, p_{L+1}) \in \mathbb N^{L+2}$,  a neural network of depth $L+1$ (with $L$ hidden layers) and architecture \(\A\) is  a function of the form
\begin{equation}
	\label{eq:nn}
	f: \R^{p_0} \rightarrow \R^{p_{L+1}}\,, \quad 
	f(x) = W_L \circ \sigma_{v_{L}} \circ W_{L-1} \circ \sigma_{v_{L-1}} \circ \dots \circ W_1\circ\sigma_{v_1} \circ 
	W_0 \circ x,
\end{equation}
where $W_i \in \R^{p_{i+1} \times p_i}$ are weight matrices and $v_i \in \R^{p_i}$ are shift (bias) vectors. The maximum number of neurons in one layer $\|\A\|_\infty$ is called the width of the neural network. Next, we introduce a subclass $\NN(L, \A, s)$ of neural networks of depth $L+1$ with architecture $\A$ and at most $s$ non-zero weights. That is, $\NN(L, \A, s)$ consist of functions of the form \eqref{eq:nn}, such that
\[
\begin{cases}
\|W_0\|_\infty \vee \max\limits_{1 \leq \ell \leq L} \left\{ \|W_\ell\|_\infty \vee \|v_\ell\|_\infty \right\} \leq 1\,, \\
\|W_0\|_0 + \sum\limits_{\ell = 1}^L \left(\|W_\ell\|_0 + \|v_\ell\|_0 \right) \leq s\,.
\end{cases}
\]
We also use the notation $\NN(L, \A)$, standing for $\NN(L, \A, \infty)$.
Throughout the paper, we use the ReQU (rectified quadratic unit) activation function, defined as
\begin{equation*}
	\sigma(x) = (x \vee 0)^2.
\end{equation*}

\paragraph*{Concatenation and parallelization of neural networks}

During the construction of approximating network in \Cref{prop:requ_approx}, we use the operations of concatenation (consecutive connection) and parallel connection of neural networks. Given neural networks $g$ and $h$ of architectures $(p_0, p_1, \dots, p_L, p_{out})$ and $(p_{in}, p_{L+1}, p_{L+2}, \dots, p_{L+M+1})$,  respectively, such that $p_{in} = p_{out}$, the concatenation of $g$ and $h$ is their composition $h \circ g$, that is, a neural network of the architecture $(p_0, p_1, \dots, p_L, p_{L+1}, p_{L+2}, \dots, p_{L+M+1})$. The parallel connection of neural networks is defined as follows. Let
\[
    f(x)  = W_L \circ \sigma_{v_{L}} \circ W_{L-1} \circ \sigma_{v_{L-1}} \circ \dots \circ W_1\circ\sigma_{v_1} \circ 
	W_0 \circ x
\]
and
\[
    \widetilde f(x)  = \widetilde W_L \circ \sigma_{\widetilde v_{L}} \circ \widetilde W_{L-1} \circ \sigma_{\widetilde v_{L-1}} \circ \dots \circ \widetilde W_1\circ\sigma_{\widetilde v_1} \circ 
	\widetilde W_0 \circ x
\]
be two neural networks of the same depth with architectures $(p_0, p_1, \dots, p_L, p_{L+1})$ and $(p_0, \widetilde p_1, \widetilde p_2 \dots, \widetilde p_L, \widetilde p_{L+1})$, respectively. Define
\[
    \overline W_0 =
    \begin{pmatrix}
        W_0 \\ \widetilde W_0.
    \end{pmatrix}\,,
    \quad
    \overline W_\ell =
    \begin{pmatrix}
        W_\ell & O \\ O & \widetilde W_\ell
    \end{pmatrix}\,,
    \quad \text{and} \quad
    \overline v_\ell =
    \begin{pmatrix}
        v_\ell \\ \widetilde v_\ell
    \end{pmatrix}
    \quad \text{for all $\ell \in \{1, \dots, L\}$}.
\]
Then the parallel connection of $f$ and $\widetilde f$ is a neural network $\overline f$ of architecture $(p_0, p_1 + \widetilde p_1, p_2 + \widetilde p_2, \dots, p_L + \widetilde p_L, p_{L + 1} + \widetilde p_{L + 1})$, given by 
\[
    \overline f(x)  = \overline W_L \circ \sigma_{\overline v_{L}} \circ \overline W_{L-1} \circ \sigma_{\overline v_{L-1}} \circ \dots \circ \overline W_1\circ\sigma_{\overline v_1} \circ 
	\overline W_0 \circ x.
\]
Note that the number of non-zero weights of $\overline f$ is equal to the sum of the ones in $f$ and $\widetilde f$.


\section{Main results}
\label{sec:main_result}
\subsection{Approximation of functions from H\"{o}lder classes}

Our main result states that any function from $\H^\beta([0,1]^d, H)$, $H > 0$, $\beta > 2$, can be approximated by a feed-forward deep neural network with ReQU activation functions in  $\H^\ell([0,1]^d),$ $\ell \in \{0, \dots, \lfloor\beta\rfloor\}.$

\begin{Thm}
\label{prop:requ_approx}
	Let $\beta > 2$ and let $p, d \in \mathbb N$.
	Then, for any $H > 0$, for any $f : [0, 1]^d \rightarrow \R^p$, $f \in \H^\beta([0,1]^d, H)$ and any integer $K \geq 2$, there exists a neural network $h_f : [0, 1]^d \rightarrow \R^p$ of the width
	\[
	    \bigl(4d (K + \lbeta)^d \bigr) \vee 12\,\left((K + 2\lbeta) + 1\right) \vee p
    \]
    with
    \[
        6 + 2(\lbeta-2) + \lceil \log_{2}{d} \rceil + 2\left( \left\lceil \log_2 (2d\lbeta + d) \vee \log_2 \log_2 H \right\rceil \vee 1 \right)
    \]
    hidden layers and at most $p (K + \lbeta)^{d} C(\beta, d, H)$ non-zero weights taking their values in $[-1, 1]$, such that, for any $\ell \in \{0, \dots, \lfloor\beta\rfloor\}$,
	\begin{equation}
		\label{eq:approx_H_l_norm}
		\left\|f - h_f\right\|_{\H^\ell([0, 1]^d)}
		\leq  \frac{ (\sqrt{2}e d)^{\beta} H}{K^{\beta-\ell}} + \frac{9^{d(\lbeta - 1)} (2\lbeta + 1)^{2d + \ell} (\sqrt{2}ed)^\beta H}{K^{\beta - \ell}}.
	\end{equation}
	The above constant $C(\beta, d, H)$ is given by
	\[
	    C(\beta, d, H)
	    = \left( 60 \bigl( \left\lceil \log_2 (2d\lbeta + d) \vee \log_2 \log_2 H \right\rceil \vee 1 \bigr) + 38 \right) 
	    + 20 d^2 + 144 d \lbeta + 8 d.
	\]
\end{Thm}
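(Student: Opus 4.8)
The plan is to reduce to scalar outputs, approximate $f$ by an explicit tensor-product spline surrogate, and then realise that surrogate \emph{exactly} by a ReQU network whose weights can be rescaled into $[-1,1]$.

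\textbf{Reduction and surrogate.} A network with $p$-dimensional output is obtained by parallelising $p$ scalar networks that share the same initial layers, so it suffices to treat $p=1$. On the uniform grid of mesh $1/K$ on $[0,1]^d$ I would take the tensor-product B-spline basis $\{B_{\balpha}\}$ of coordinate degree $\lbeta$, of cardinality $(K+\lbeta)^d$, and work with the quasi-interpolation surrogate
\[
    \tilde f = \sum_{\balpha} c_{\balpha}\, B_{\balpha},
    \qquad
    B_{\balpha}(x) = \prod_{i=1}^{d} B_{\alpha_i}(x_i),
\]
where $c_{\balpha}$ comes from a de Boor--Fix type functional applied to $f$ (equivalently, from a local Taylor polynomial of $f$ of degree $\lbeta$). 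The facts in \ref{app:splines} should give the coefficient bound $|c_{\balpha}|\lesssim C^{d\lbeta}H$ and the Jackson-type estimate producing \eqref{eq:approx_H_l_norm}. The structural point is that $\tilde f$ is a \emph{fixed} piecewise polynomial of coordinate degree $\lbeta$, i.e.\ a finite linear combination of products of shifted truncated powers $(x_i-t)_+^{\lbeta}$ and ordinary monomials; one uses degree-$\lbeta$ rather than merely continuous partition-of-unity functions precisely so that $\tilde f$ reproduces all derivatives of $f$ up to order $\lbeta$.

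\textbf{Exact ReQU realisation with bounded weights.} This step rests on the identities, for all reals $u,v$,
\[
    u^2 = \sigma(u)+\sigma(-u),
    \qquad
    uv = \tfrac14\bigl(\sigma(u+v)+\sigma(-u-v)-\sigma(u-v)-\sigma(-u+v)\bigr),
\]
so one ReQU layer of width $4$ multiplies two quantities and, by induction, $O(\log k)$ such layers produce $x\mapsto x^{k}$ and $x\mapsto (x-t)_+^{k}$ exactly (for the latter one writes $(x-t)_+^{k}=(x-t)^{k-2^{m}}\,\sigma^{\circ m}(x-t)$ with $2^{m}\le k<2^{m+1}$). After each layer I would rescale the output into a fixed bounded range so that every weight matrix and bias has entries in $[-1,1]$; tracking these rescalings is what costs the extra layers. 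I would then assemble the network in four blocks: (i) a preliminary block of $\approx 2(\lbeta-2)$ layers (plus $O(1)$) mapping $x_1,\dots,x_d$ to all univariate features $B_{\alpha_i}(x_i)$, of width $O(K+\lbeta)$; (ii) a balanced binary multiplication tree of depth $\lceil\log_2 d\rceil$ forming the $(K+\lbeta)^{d}$ tensor features $B_{\balpha}$, of width $O(d(K+\lbeta)^{d})$; (iii) a block of $\approx 2\lceil\log_2(2d\lbeta+d)\vee\log_2\log_2 H\rceil$ layers that builds the coefficients $c_{\balpha}$ (bounded by $\sim C^{d\lbeta}H$, hence needing about $\log\log(C^{d\lbeta}H)$ squaring steps) and multiplies them onto the features; (iv) a final linear layer realising $\sum_{\balpha}c_{\balpha}B_{\balpha}$ and, for general $p$, parallelising the $p$ coordinates. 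Adding the block depths and counting non-zero entries layer by layer (each block contributing $O((K+\lbeta)^{d})$ weights per unit depth) should give exactly the claimed depth, width, and the bound $p(K+\lbeta)^{d}C(\beta,d,H)$.

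\textbf{Error and the main obstacle.} Since $h_f=\tilde f$ on $[0,1]^d$, it remains to estimate $\|f-\tilde f\|_{\H^{\ell}}$ for $\ell\le\lbeta$. Writing $f-\tilde f=\sum_{\balpha}B_{\balpha}\,(f-P_{\balpha})$ with $P_{\balpha}$ the associated local Taylor polynomial, applying the Leibniz rule to $D^{\bgamma}(f-\tilde f)$ for $|\bgamma|=\ell$, and combining the multivariate Taylor remainder bound for $f\in\H^{\beta}([0,1]^d,H)$ with Markov/Bernstein inequalities for the derivatives of the B-splines on their supports and with bounds on the B-spline values and on the support width ($\sim 2\lbeta+1$ cells per coordinate) should yield the two-term estimate \eqref{eq:approx_H_l_norm}; the combinatorial Taylor factors together with Stirling produce $(\sqrt2\,e\,d)^{\beta}$, while the B-spline value, derivative and support constants produce $9^{d(\lbeta-1)}(2\lbeta+1)^{2d+\ell}$. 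I expect the genuine difficulty to be the simultaneous fulfilment of three competing requirements: the surrogate must be represented \emph{exactly} (so that every $D^{\bgamma}h_f$ with $|\bgamma|\le\lbeta$ matches $D^{\bgamma}\tilde f$ and no additional error contaminates the higher Hölder norms), all weights must lie in $[-1,1]$, and the depth must be independent of $K$ and only doubly-logarithmic in $H$. Meeting the weight constraint forces the repeated-squaring construction of large constants and a rescaling of every intermediate quantity, and carrying these rescalings through the multiplication gadgets while preserving exactness is what produces the explicit constants in $C(\beta,d,H)$ and in \eqref{eq:approx_H_l_norm}.
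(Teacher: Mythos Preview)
Your proposal is essentially the paper's proof: quasi-interpolate by a tensor-product B-spline of coordinate degree $\lbeta$, realise the surrogate \emph{exactly} by a ReQU network (univariate B-spline block, $\lceil\log_2 d\rceil$-depth product tree, repeated-squaring block to absorb the large coefficients, final linear layer), and invoke the spline Jackson estimate from the appendix for the $\H^\ell$ error. The block depths, widths, and weight counts you list match the paper's Steps~1--3 and Lemmas~1--4.

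One small divergence worth flagging: you propose building the univariate B-splines from truncated powers via $(x-t)_+^{k}=(x-t)^{k-2^{m}}\sigma^{\circ m}(x-t)$. The paper instead realises only the \emph{quadratic} B-splines directly from ReQU (since $\sigma(x)=(x)_+^2$ gives the needed $(x-t)_+^2$ terms at once), and then climbs to degree $\lbeta$ using the de Boor--Cox recursion~\eqref{eq:b-spline_recursion}, two layers per degree. This is why the depth contribution is exactly $4+2(\lbeta-2)$. Your truncated-power route would also work, but keeping the weights in $[-1,1]$ is a bit more delicate there, whereas the recursion keeps the intermediate quantities (the $B_j^{m,K}$ and the affine factors $(x-a_j)/(a_{j+m+1}-a_j)$) naturally bounded. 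Also, in your error sketch the identity $f-\tilde f=\sum_{\balpha}B_{\balpha}(f-P_{\balpha})$ is not literally correct; the paper's argument (Theorem in \ref{app:splines}) uses $\mathcal Q_d P=P$ for polynomials of degree $\le\lbeta$ and then splits $f-\mathcal Q_d f=(f-P)+\mathcal Q_d(P-f)$ locally, which is what actually produces the two terms in~\eqref{eq:approx_H_l_norm}.
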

An illustrative comparison of the result of \Cref{prop:requ_approx} with the literature is provided in Table \ref{table:comparison} below.
\Cref{prop:requ_approx} improves over the results of \cite[Theorem 3.3]{li20a} and \cite[Theorem 7]{li20b} as far as the approximation properties of ReQU neural networks in terms of the $L_\infty$-norm are concerned. Namely, \Cref{prop:requ_approx} claims that, for any $f \in \H^\beta([0, 1]^d, H)$, $\beta > 2$, and any $\eps > 0$, there is a ReQU neural network of width $O(\eps^{-d/\beta})$ and depth $O(1)$ with $O(\eps^{-d/\beta})$ non-zero weights, taking their values in $[-1, 1]$, such that it approximates $f$ within the accuracy $\eps$ with respect to the $L_\infty$-norm on $[0, 1]^d$. In \cite{li20a, li20b}, the authors considered a target function from a  general weighted Sobolev class but measure the quality of approximation in terms of a  weighted $L_2$-norm. Nevertheless, the width and the number of non-zero weights of the constructed neural networks in \Cref{prop:requ_approx}, \cite[Theorem 3.3]{li20a}, and \cite[Theorem 7]{li20b} coincide. The difference is that, while in \cite{li20a, li20b}, the depth of the neural networks is of order $O(\log(1/\eps)),$  we need only $O(1)$ layers. More importantly, the authors of \cite{li20a, li20b} do not provide any guarantees on the absolute values of the weights of the approximating neural networks. At the same time, all the weights  of our neural network take their values in $[-1, 1]$. We will explain the importance of the latter property a bit later in this section. It is also worth mentioning that the same number of non-zero weights $O(\eps^{-d/\beta})$ is needed to approximate $f$ within the tolerance $\eps$ (with respect to the $L_\infty$-norm) via neural networks with other activation functions, such as ReLU \cite{yarotsky20}, sigmoid \cite{langer21, guhring21}, and $\arctan$ \cite{guhring21}.

The approximation properties of deep neural networks with respect to norms involving derivatives are much less studied. In the rest of this section, we elaborate on the comparison with the state-of-the-art results 
in this direction.
The fact that shallow neural networks can simultaneously approximate a smooth function and its derivatives is known for a long time from the paper \cite{xu05}, where the authors derived an upper bound on the approximation accuracy in terms of the modulus of smoothness. To be more precise, they showed that if a function $f$ is continuous on a compact set $\mathcal K \subset \R^d$ and its derivatives up to an order $s \in \mathbb Z_+$ are in $L_p(\mathcal K)$, then there is a neural network $g_f$ with
\[
    (K+1) \sum\limits_{j=0}^K \binom{j + d - 1}{d - 1} 
    = O(K^{d+1})
\]
hidden units, such that
\[
    \left\| D^\bgamma g_f - D^\bgamma f \right\|_{L_p(\mathcal K)}
    \lesssim \omega \left(D^\bgamma f, \frac1{\sqrt{K}} \right)_p + \frac{\|D^\bgamma f\|_{L_p(\mathcal K)}}{K}
    \quad \text{for all $\bgamma \in \Z_+^d$ such that $|\bgamma| \leq s$.}
\]
Here
\[
    \omega(\phi, \delta)_p = \sup\limits_{0 < t \leq \delta} \|\phi(\cdot + t) - \phi(\cdot)\|_{L_p(\mathcal K)},
    \quad \delta > 0,
\]
is the modulus of smoothness of a function $\phi$.
Moreover, if, in addition, the derivatives of $f$ of order $s$ are $\alpha$-H\"older, $\alpha \in (0, 1]$, then it holds that
\[
    \|f - g_f\|_{\H^s(\mathcal K)} \lesssim K^{-\alpha/2}.
\]
Taking into account that $\omega(\phi, \delta)_p \lesssim \delta$ for a Lipschitz function $\phi$, we conclude that one has to use a shallow neural network with at least $\Omega(\eps^{-2(d+1)})$ hidden units to approximate a function of interest or its derivatives within the accuracy $\eps$ with respect to the $L_p(\mathcal K)$-norm even if $f$ is sufficiently smooth. This bound becomes prohibitive in the case of large dimension. The situation is much better in the case of deep neural networks.
To our knowledge, G\"{u}hring and Raslan \cite[Proposition 4.8]{guhring21} were the first to prove that, for any $f \in \H^\beta([0, 1]^d, H)$, $\eps > 0$, and $\ell \in \{0, \dots, \lbeta\}$, there is a ReQU neural network with $O(\eps^{-d/(\beta - \ell)})$ non-zero weights, which approximates $f$ within the accuracy $\eps$ with respect to the $\H^\ell$-norm on $[0, 1]^d$.
A drawback of the result in \cite{guhring21} is that the architecture of the suggested neural network heavily depends on $\ell$. Hence, it is hard to control higher-order derivatives of the approximating neural network itself, which is of interest in such applications as numerical solutions of PDEs and density transformations. This question was addressed in \cite{deryck21, hon21}, where the authors considered the problem of \emph{simultaneous} approximation of a target function with respect to the H\"{o}lder norms of different orders. In \cite[Theorem 5.1]{deryck21}, the authors showed that, for any  $f \in \H^s([0, 1]^d, H)$, $s \in \mathbb N$, and any sufficiently large integer $K$, there is a three-layer neural network $g_f$ of width $O(K^d)$ with $\tanh$ activations, such that
\[
    \|f - g_f\|_{\H^\ell([0, 1]^d)} = O\left( \frac{(\log K)^\ell}{K^{s - \ell}} \right)
\]
simultaneously for all $\ell \in \{0, 1, \dots, s-1\}$. Note that \Cref{prop:requ_approx} yields a sharper bound, removing the odd logarithmic factors. Regarding the results of the recent work \cite{hon21}, we found some critical flaws in the proofs of the main results. We explain our concerns in \Cref{rem:hon} below as the main results of \cite{hon21} are close to ours. \Cref{prop:requ_approx} also improves over \cite{guhring21, deryck21, hon21} in another aspect. Namely, \Cref{prop:requ_approx} guarantees that the weights of the neural network take their values in $[-1, 1]$, while in \cite{guhring21, deryck21, hon21} they may grow polynomially as the approximation error decreases. This boundeness property is extremely important when studying the generalization ability of the neural networks in various statistical problems since the metric entropy of the corresponding parametric class of neural networks involves a uniform upper bound on the weights (see, for instance, \cite[Theorem 2 and Lemma 5]{schmidt-hieber20}).  Besides that, a polynomial upper bound on the absolute values of the weights becomes prohibitive when approximating analytic functions (see \Cref{rem:deryck} in the next section).

\begin{Rem}
    \label{rem:hon}
    The proofs of Theorem 3.1 and Theorem 3.8 in \cite{hon21} have  critical flaws. 
    In particular, on page 18, the authors mistakenly bound the Sobolev norm $\W^{1, \infty}$ of the composite function $\phi(x) = \widetilde\phi(\phi_\alpha(\psi(x)) / \alpha!, P_\alpha(h))$ by the $\W^{1, \infty}$-norm of $\widetilde\phi$. A similar error appears on page 24. In this way, the authors obtain that the $\W^{1, \infty}$-norm of $\phi(x)$ is bounded by $432 s^d$, where $s$ is the smoothness of the target function. In fact, the latter norm should scale as $1/\delta$, where $\delta > 0$ is a small auxiliary parameter describing the width of the boundary strips. This flaw completely ruins the proofs of the main results in \cite{hon21}, Theorem 1.1 (see the 9th line of the proof on p.8) and  Theorem 1.4 (the 9th line of the proof on p.25).
\end{Rem}

\begin{table}[t]
    \caption{comparison of the state-of-the-art results on approximation of a function $f \in \H^\beta([0,1]^d, H), \beta > 2, H > 0$, within the accuracy $\eps$ via neural networks. The papers marked with $*$ consider the case of integer $\beta$ only.}
    \noindent\centering
    \begin{tabular}{llllll}
    \toprule
    Paper & Norm & Depth & \makecell[lc]{Non-zero \\ weights} & \makecell[lc]{Simultaneous \\ approximation} & \makecell[lc]{Weights \\ in $[-1, 1]$} \\
    \midrule
    \cite{li20b}* & $L_2$ & $O(\log(1/\eps))$ & $O(\eps^{-d/\beta})$ & N/A & \no \\
    \hline
    \cite{guhring21}* & $\H^\ell$ & $O(\log(1/\eps))$ & $O(\eps^{-d/(\beta - \ell)})$ & \no & \no \\
    \hline
    \cite{deryck21} & $\H^\ell$ & $3$ & $O(\eps^{-d/(\beta - \ell)} (\log(1/\eps))^\ell)$ & \yes & \no \\
    \hline
    \cite{langer21} & $L_\infty$ & $O(\log(1/\eps))$ & $O(\eps^{-d/\beta})$ & N/A & \no \\
    \hline
    \cite{schmidt-hieber20} & $L_\infty$ & $O(\log(1/\eps))$ & $O(\eps^{-d/\beta})$ & N/A & \yes \\
    \hline
    \cite{li20a}* & $L_2$ & $O(\log(1/\eps))$ & $O(\eps^{-d/\beta})$ & N/A & \no \\
    \hline
    \cite{xu05}* & $\H^\ell$ & 2 & $O(\eps^{-2(d+1) /(1 \wedge (\beta - \ell))})$ & \yes & \no \\
    \hline
    Ours & $\H^\ell$ & $O(1)$ & $O(\eps^{-d/(\beta - \ell)})$ & \yes & \yes \\
    \bottomrule
    \end{tabular}
    \label{table:comparison}
\end{table}

\subsection{Approximation of analytic functions}

The bound of \Cref{prop:requ_approx} can be transformed to exponential (in $K$) rates of approximation for analytic functions. In the rest of this section, we consider $(Q, R)$-analytic functions defined below.
\begin{Def}
    A function $f \in C^{\infty}(\rset^{d})$ is called $(Q,R)$-analytic with $Q, R > 0$, if it satisfies the inequality
    \begin{equation}
        \label{eq:holder_norm_analyt}
        \left\|f\right\|_{\H^s([0, 1]^d)} \leq Q R^{-s} s! \quad \text{for all $s \in \mathbb N_0$.}
    \end{equation}
\end{Def}
Similar concepts were also considered in \cite{candes07, candes09, deryck21}. Applying \Cref{prop:requ_approx} to $(Q, R)$-analytic functions, we get the following corollary.

\begin{Co}
\label{co:analyt_exp_bound}
Let $f$ be a $(Q,R)$-analytic function and let $\ell \in \mathbb N_0$. Then, for any integer $K > 2ed \, 9^d / R$, there exists a neural network $h_{f}: [0,1]^{d} \mapsto \rset^p$ of width $O((K + \ell)^d \vee p)$ with $O(K + \ell)$ hidden layers and at most $O(p(K + \ell)^d \log K)$ non-zero weights, taking values in $[-1, 1]$, such that 
    \begin{equation}
        \label{eq:analyt_bound}
        \left\|f - h_{f}\right\|_{\H^\ell([0, 1]^d)} \leq 
        \frac{Q e (\sqrt{2} \, e d \, 9^d)^\ell (2s - 1)^{2d + 2\ell}}{R^\ell} \exp\left\{ -\left\lfloor \frac{KR}{2ed \, 9^{d}} \right\rfloor \right\}.
    \end{equation}
\end{Co}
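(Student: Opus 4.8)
The plan is to deduce \Cref{co:analyt_exp_bound} from \Cref{prop:requ_approx} by choosing the smoothness parameter as a function of $K$. First I would note that when $f$ is $(Q,R)$-analytic, then for every integer $s\ge 3$ the bound \eqref{eq:holder_norm_analyt} together with the definition of the H\"older ball (for integer $s$ one has $\lbeta=s-1$, so $\|f\|_{\H^{s}([0,1]^{d})}$ is exactly the quantity bounded in \eqref{eq:holder_norm_analyt}) shows that $f\in\H^{s}([0,1]^{d},H_{s})$ with $H_{s}:=Q\,R^{-s}\,s!$. Thus \Cref{prop:requ_approx} applies with $\beta=s$, $H=H_{s}$ and the given $K$, and for every $\ell\in\{0,\dots,s-1\}$, bounding the first summand of \eqref{eq:approx_H_l_norm} by the second, it yields a ReQU network $h_{f}$ with weights in $[-1,1]$ such that
\[
\|f-h_{f}\|_{\H^{\ell}([0,1]^{d})}\le
\frac{2\cdot 9^{d(s-2)}\,(2s-1)^{2d+\ell}\,(\sqrt2\,ed)^{s}\,Q\,R^{-s}\,s!}{K^{s-\ell}}.
\]

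Next I would insert Stirling's estimate $s!\le e\sqrt{s}\,(s/e)^{s}$. The decisive observation is the cancellation $(\sqrt2\,ed)^{s}(s/e)^{s}=(\sqrt2\,d\,s)^{s}$, which removes the $e^{s}$ growth and turns the right-hand side above into at most $Q\cdot\mathrm{poly}(s,d,\ell)\cdot K^{\ell}\bigl(\sqrt2\,9^{d}d\,s/(RK)\bigr)^{s}$ with an explicit polynomial factor. I would then take $s$ of order $KR/(ed\,9^{d})$ — concretely the largest integer with $\sqrt2\,9^{d}d\,s\le KR/e$, so that the base $\sqrt2\,9^{d}d\,s/(RK)$ is at most $e^{-1}$ and, since $\sqrt2\,e<2e$, one has $s\ge\lfloor KR/(2ed\,9^{d})\rfloor$; for $K$ as in the statement this $s$ also satisfies $s>2$ and $s\ge\ell+1$, so \Cref{prop:requ_approx} is indeed applicable. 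Then $\bigl(\sqrt2\,9^{d}d\,s/(RK)\bigr)^{s}\le e^{-s}\le\exp\{-\lfloor KR/(2ed\,9^{d})\rfloor\}$, and it remains to bound the polynomial prefactor together with $K^{\ell}$: writing $K\le\mathrm{const}\cdot s$ gives $K^{\ell}=O\bigl((\sqrt2\,ed\,9^{d}/R)^{\ell}s^{\ell}\bigr)$, and $\sqrt{s}$ and $s^{\ell}$ are absorbed into powers of $2s-1$, producing the bound \eqref{eq:analyt_bound}. The width, depth and weight counts then follow by substituting $\beta=s=\Theta(K+\ell)$ and $H=H_{s}$ into the corresponding formulas of \Cref{prop:requ_approx} (using $\log_{2}\log_{2}H_{s}=O(\log(K+\ell))$ and that all weights already lie in $[-1,1]$ there).

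The step I expect to be the main obstacle is the last part of the second paragraph: since $s$ itself scales like $K$, one has to track precisely the polynomial-in-$s$ factor surviving Stirling's estimate and the factor $K^{\ell}$, and check that they can be repackaged into the exact constant $\tfrac{Q\,e\,(\sqrt2\,ed\,9^{d})^{\ell}(2s-1)^{2d+2\ell}}{R^{\ell}}$ of \eqref{eq:analyt_bound} — in particular the appearance of the constant $2ed\,9^{d}$ in the exponent is what forces the use of Stirling's bound (rather than the cruder $s!\le s^{s}$) and the particular choice of $s$. The remaining bookkeeping — verifying the constraints $\beta>2$ and $\ell\in\{0,\dots,\lbeta\}$ of \Cref{prop:requ_approx} hold for the stated range of $K$, and reading off the architecture — is routine.
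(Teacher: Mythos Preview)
Your overall strategy---apply \Cref{prop:requ_approx} with $\beta=s$ integer, $H=QR^{-s}s!$, use Stirling, and pick $s$ proportional to $K$ so that the base of the geometric term is at most $e^{-1}$---is exactly the paper's approach. The paper also absorbs the surviving polynomial factors into $(2s-1)^{2d+2\ell}$ and reads off the architecture in the same way.

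There is, however, a genuine gap in your choice of $s$. You take $s$ to be the largest integer with $\sqrt{2}\,9^{d}d\,s\le KR/e$, so $s$ depends only on $K$ and not on $\ell$. The corollary allows arbitrary $\ell\in\mathbb N_0$ while only assuming $K>2ed\,9^d/R$; for large $\ell$ (or even $\ell=0,1$ with $K$ just above the threshold) your $s$ need not satisfy $s\ge\ell+1$ (nor $s\ge 3$), so \Cref{prop:requ_approx} is not applicable. Your assertion ``for $K$ as in the statement this $s$ also satisfies $s>2$ and $s\ge\ell+1$'' is simply not justified by the hypothesis on $K$.

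The paper avoids this by a small but decisive twist: instead of applying Stirling to $s!$ directly, it first writes $s!\le (s-\ell)!\,s^{\ell}$ and applies Stirling to $(s-\ell)!$. This produces the factor $\bigl(\tfrac{2d\,9^{d}(s-\ell)}{KR}\bigr)^{s-\ell}$ (rather than your $K^{\ell}\cdot(\cdot)^{s}$), so the natural choice is $s=\ell+\lfloor KR/(2ed\,9^{d})\rfloor$. This automatically guarantees $s>\ell$, gives the exponent $-\lfloor KR/(2ed\,9^{d})\rfloor$ directly with no need to convert a $K^{\ell}$ prefactor back into powers of $s$, and makes the architecture bounds $O(K+\ell)$ (layers) and $O((K+\ell)^d)$ (width) transparent since $s=\Theta(K+\ell)$. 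With this modification your argument goes through.
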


\begin{Rem}
\label{rem:deryck}
In \cite[Corollary 5.6]{deryck21}, the authors claim that $(Q,R)$-analytic  functions can be approximated with exponential (in the number of non-zero weights $N$) rates with respect to the Sobolev norm $\W^{k, \infty}([0,1]^d)$, where $k$ is a fixed integer (see eq.(112) on p.743). However, a closer look at Corollary 5.6 reveals the fact that the right-hand side of eq.(112) includes a constant $c_{d,k,\alpha, f}$ with $\alpha$ depending on $N$ too, as follows from eq.(54). Thus, the dependence of the final bound on $N$  in Corollary 5.6 remains unclear. Besides that, in contrast to \cite[Theorem 5.2]{deryck21}, the authors do not specify the upper bound on the absolute values of the weights of the constructed neural network in Corollary 5.6. A thorough inspection of the proof shows that the weights can be as large as $O(N^{N^2})$. 
\end{Rem}

\section{Numerical experiments}
\label{sec:numerical}
In this section, we provide numerical experiments to illustrate the approximation properties of neural networks with ReQU activations. We considered a scalar function $f(x) = \sin(x_1^2 x_2)$ of two variables and approximated it on the unit square $[0, 1]^2$ via neural networks with two types of activations: ReLU and ReQU. All the neural networks were fully connected, and all their hidden layers had a width $16$. The first layer had a width $2$. The depth of neural networks took its values in $\{1, 2, 3, 4, 5\}$. In the training phase, we sampled $N = 10000$ points $X_1, \dots, X_N$ independently from the uniform distribution on $[0, 1]^2$ and tuned the weights of neural networks by minimizing the mean empirical squared error
\[
    \mathrm{ERR}(h) = \frac1N \sum\limits_{i=1}^N (h(X_i) - f(X_i))^2.
\]
After that, we computed the approximation errors of $f$ and of its gradient on the two-dimensional grid $G = \{0, 1/M, \dots, (M - 1) / M, 1\}^2$ with $M = 500$:
\[
    \frac1{M^2} \sum\limits_{x \in G} (\widehat h(x) - f(x))^2
    \quad \text{and} \quad
    \frac1{M^2} \sum\limits_{x \in G} \left\| \nabla\widehat h(x) - \nabla f(x) \right\|^2,    
\]
where $\widehat h$ denotes the neural network with the weights tuned on the training phase. We repeated the experiment $10$ times and computed average approximation errors. The results are displayed in \Cref{fig:err}. The quality of approximation by neural networks with ReQU activations turns out to be better than by the ones with ReLU. Note that in such a scenario the stochastic error becomes small, and the quality of learning is mostly determined by the approximation error. This claim is supported by the fact that the error values were similar in different experiments.

\begin{figure}[ht]
    \noindent\centering
    \includegraphics[width=\linewidth]{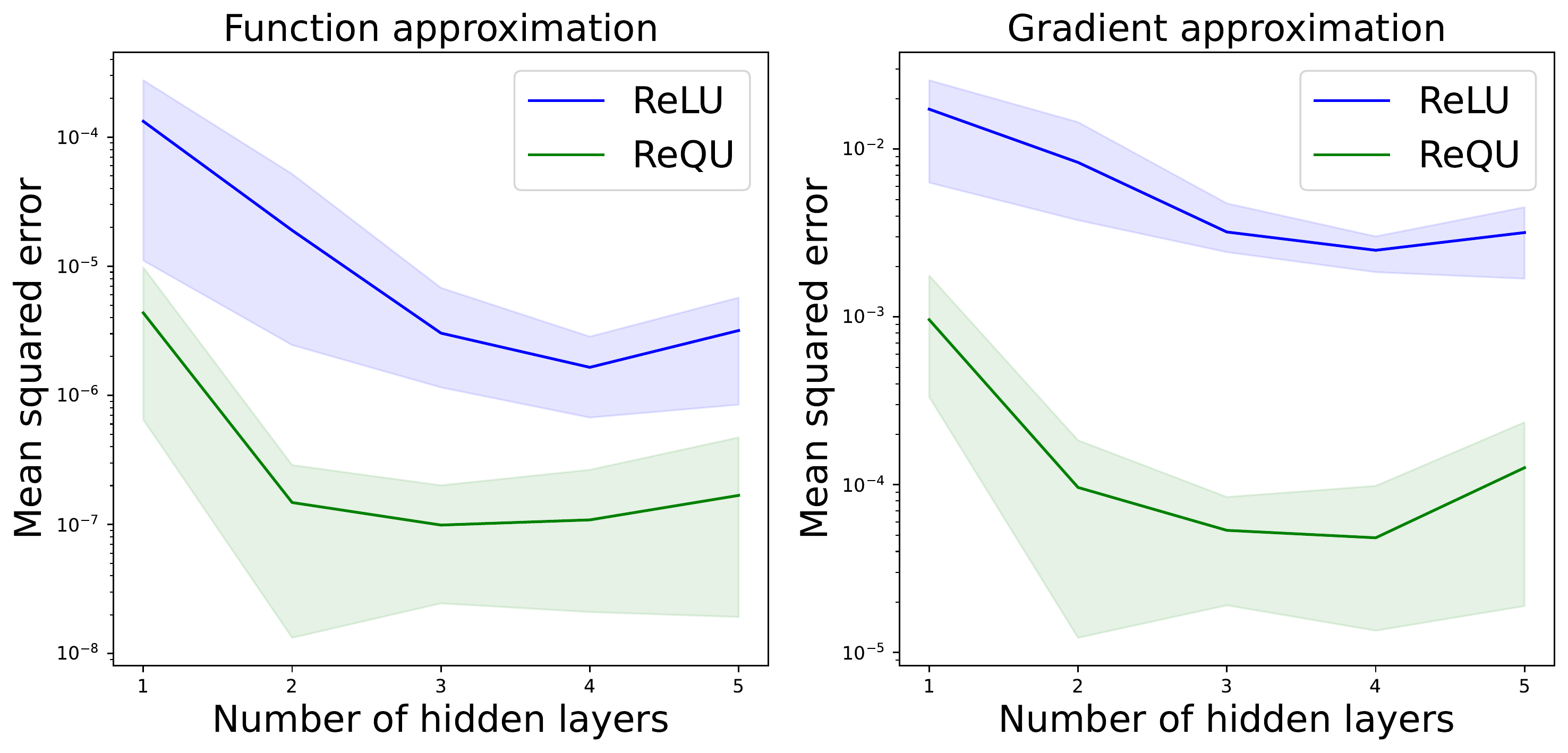}
    \caption{The mean squared error of approximation of the target function $f = \sin(x_1^2 x_2)$ (left) and its gradient (right) by neural networks with different activations.}
    \label{fig:err}
\end{figure}

\section{Proofs}
\label{sec:proofs}
\subsection{Proof of \Cref{prop:requ_approx}}

\noindent{\bf Step 1.}\quad Let $f = (f_1, \dots, f_p)$. Consider a vector $a = (a_1, \dots, a_{2\lbeta + K + 1})$, such that
\begin{align}
    \label{eq:knots}
    &\notag
    a_1 = \ldots = a_{\lbeta + 1} = 0\,;
    \\&
    a_{\lbeta + 1 + j} = j/K, \quad 1 \leq j \leq K - 1\,;
    \\&\notag
    a_{\lbeta + K + 1} = \ldots = a_{2\lbeta + K + 1} = 1\,.
\end{align}
By \Cref{thm:spline_approx}, there exist tensor-product splines $S_f^{\lbeta, K} = (S_{f, 1}^{\lbeta, K}, \dots, S_{f, p}^{\lbeta, K})$ of order $\lbeta \geq 2$ associated with knots at $\big\{(a_{j_1}, a_{j_2}, \dots, a_{j_d}) : j_1, \dots, j_d \in \{1, \dots, 2\lbeta + K + 1\}\big\}$ such that
\begin{align*}
    \left\|f - S_f^{\lbeta, K} \right\|_{\H^\ell([0, 1]^d)} 
    &
    \leq \max_{m \in \{1, \dots, p\}} \left\|f_m - S_{f, m}^{\lbeta, K} \right\|_{\H^\ell([0, 1]^d)}
    \\&
    \leq \frac{ (\sqrt{2}e d)^{\beta} H}{K^{\beta-\ell}} + \frac{9^{d(\lbeta - 1)} (2\lbeta + 1)^{2d + \ell} (\sqrt{2}ed)^\beta H}{K^{\beta - \ell}}.
\end{align*}
Our goal is to show that $S_{f}^{\lbeta, K}$ can be represented by a neural network $h_{f}$ with ReQU activation functions. For this purpose, for each $m \in \{1, \dots, p\}$, we use an expansion of $S_{f, m}^{\lbeta, K}$ with respect to the basis
\[
\big\{N_{j_1}^{\lbeta, K}(x_1) \cdot N_{j_2}^{\lbeta, K}(x_2) \cdot \ldots \cdot N_{j_d}^{\lbeta, K}(x_d) : j_1, \dots,j_d \in \{1, \dots,\lbeta+K\} \big\},
\]
where $N^{\lbeta, K}_1, \dots, N^{\lbeta, K}_{\lbeta+K}$ are normalized $B$-splines defined in \eqref{eq:norm_b-spline}. There exist coefficients $\{w^{(f)}_{m, j_1, j_2, \dots, j_d} : 1 \leq m \leq p, 1 \leq j_1, \dots, j_d \leq \lbeta + K\}$ such that, for any $m \in \{1, \dots, p\}$,
\begin{equation}
\label{eq:spline_expansion}
\begin{split}
S_{f, m}^{\lbeta, K} 
&= \sum\limits_{j_1, \dots, j_d = 1}^{\lbeta+K} w^{(f)}_{m,j_1, \dots, j_d} \prod\limits_{\ell=1}^d N_{j_{\ell}}^{\lbeta,K}(x_{\ell}) \\
&= \sum\limits_{j_1, \dots, j_d = 1}^{\lbeta+K} w^{(f)}_{m,j_1, \dots, j_d} \prod\limits_{\ell = 1}^d (a_{j_\ell+\lbeta+1} - a_{j_\ell}) B_{j_\ell}^{\lbeta,K}(x_\ell)\,,
\end{split}
\end{equation}
where $B_{j_\ell}^{\lbeta,K}(x_\ell)$ are (unnormalized) $B$-splines defined in \eqref{eq:b-spline_recursion}. Hence, in order to represent $S_f^{\lbeta, K} = (S_{f, 1}^{\lbeta, K}, \dots, S_{f, p}^{\lbeta, K})$ by a neural network with ReQU activations, we first perform this task for the products of basis functions $\prod_{\ell = 1}^d B_{j_\ell}^{\lbeta,K}(x_\ell)$, $1 \leq j_1, \dots, j_d \leq \lbeta + K$.

\medskip

\noindent{\bf Step 2.}\quad
Applying \Cref{lem:b-spline_representation} with $q = \lbeta$ component-wise for each $x_i, i \in \{1,\ldots,d\}$, we obtain that the mapping 
\begin{align}
    \label{eq:splines_costruct}
    x = (x_1,\dots,x_d) \mapsto \Big(
    &\notag
    x_1, K, B_{1}^{\lbeta, K}(x_1), \dots, B_{\lbeta + K}^{\lbeta, K}(x_1),
    \\&
    x_2, K, B_{1}^{\lbeta, K}(x_2), \ldots, B_{\lbeta + K}^{\lbeta, K}(x_2), \dots,
    \\&\notag
    x_d, K, B_{1}^{\lbeta, K}(x_d), \ldots, B_{\lbeta + K}^{\lbeta, K}(x_d) \Big)
\end{align}
can be represented by a neural network from the class $\NN\big(4 + 2(\lbeta-2), (d, d\A_1) \big)$, where
\begin{equation}
\label{eq:A_1_def}
    \A_1
    =
    \big(\mathcal{B}_{2}, \mathcal{B}_{3}, \ldots, \mathcal{B}_{\lbeta}, K + \lbeta + 2\big),
\end{equation}
and $\mathcal{B}_{2}, \dots, \mathcal{B}_{\lbeta}$ are defined in \eqref{eq:architecture_spline_third_degree} and \eqref{eq:B_spline_m_order}.
Here $d\A_1$ should be understood as element-wise multiplication of $\A_1$ entries by $d$. Since each coordinate transformation in \eqref{eq:splines_costruct} can be computed independently, the number of parameters in this network does not exceed
\[
    72 d \lbeta (K+2\lbeta ).
\] 
Using \Cref{lem:requ_prod} with $k = d$, for any fixed $j_1,\dots,j_d \in \{1,\dots,\lbeta + K\}$, we calculate all products $\prod_{\ell=1}^{d}B_{j_\ell}^{\lbeta,K}(x_\ell)$ by a neural network from
\[
    \NN\left(\lceil \log_{2}{d} \rceil,\big( (K + \lbeta + 2) d, (K + \lbeta)^{d} \A_2, (K + \lbeta)^{d})\right)
\]
with
\begin{equation}
    \label{eq:A_2_def}
    \A_2 = \bigl(\underbrace{2^{\lceil \log_{2}{d} \rceil + 1}, 2^{\lceil \log_{2}{d} \rceil }, \ldots, 4}_{\lceil \log_{2}{d} \rceil}\bigr)\,.
\end{equation}
Note that this network contains at most 
\[
    5(K+\lbeta)^{d}2^{2\lceil \log_{2}{d} \rceil} \leq 20 d^2(K+\lbeta)^{d}
\]
non-zero parameters. 

\medskip

\noindent{\bf Step 3.}\quad
Let us recall that, due to \eqref{eq:spline_expansion}, we have
\begin{equation}
    \label{eq:spline_exp_fin}
    S_{f, m}^{\lbeta, K}
    = \sum\limits_{j_1, \dots, j_d = 1}^{\lbeta + K} w^{(f)}_{m, j_1, \dots, j_d} \prod\limits_{\ell = 1}^{d} (a_{j_\ell + \lbeta + 1} - a_{j_\ell}) \prod\limits_{\ell = 1}^{d} B_{j_\ell}^{\lbeta, K}(x_\ell)
\end{equation}
for all $m \in \{1, \dots, p\}$.
On Step $2$, we constructed a network which calculates the products $\prod\limits_{\ell = 1}^{d} B_{j_\ell}^{\lbeta,K}(x_\ell)$ for all $(j_1, \dots, j_d) \in \{1, \dots,\lbeta+K\}^d$. Now we implement multiplications by 
\[
    \tilde{w}^{(f)}_{m,j_1, \dots, j_d} := w^{(f)}_{m,j_1, \dots, j_d} \prod\limits_{\ell = 1}^{d} (a_{j_\ell+\lbeta+1} - a_{j_\ell}).
\]
\Cref{thm:coef_bound} implies that
\[
    \left|w^{(f)}_{m,j_1, \dots, j_d} \right|
    \leq	(2\lbeta + 1)^d 9^{d(\lbeta - 1)} \|f_m\|_{L_\infty([0,1]^d)} \leq (2\lbeta + 1)^d 9^{d(\lbeta - 1)} H
\]
Since $\beta > 2$ by the conditions of the theorem, we can write
\[
    \left|w^{(f)}_{m,j_1, \dots, j_d} \right|
    \leq (2\lbeta + 1)^{d \lbeta} H.
\]
Equation \eqref{eq:knots} yields that $0 \leq a_{j+\lbeta+1} - a_j \leq 1$ for all $j \in \{1,\ldots,\lbeta + K\}$. In view of \Cref{lem:requ_mult}, we can implement the multiplication by $\tilde{w}^{(f)}_{m,j_1, \dots, j_d}$ by a network from the class
\[
    \NN\left( 2\left( \left\lceil \log_2 (2d\lbeta + d) \vee \log_2 \log_2 H \right\rceil \vee 1 \right) + 2, \big(1, 5, 5, \dots, 5, 4, 1 \big) \right).
\]
Here we used the fact that, since $\lbeta \geq 2$,
\begin{align*}
    \log_4 \left( \log_4 \big( (2\lbeta + 1)^{d \lbeta} H \big) \right)
    &
    \leq \log_4 \left( 2\log_4 \big( (2\lbeta + 1)^{d \lbeta} \big) \vee 2\log_4 H \right)
    \\&
    = \log_4 \left( \log_2 \big( (2\lbeta + 1)^{d \lbeta} \big) \vee \log_2 H \right)
    \\&
    =  \log_4 \left( d \lbeta \log_2 \big( (2\lbeta + 1) \big) \right) \vee \log_4 \left( \log_2 H \right)
    \\&
    \leq \log_2 (2d\lbeta + d) \vee \log_2 \log_2 H.
\end{align*}
Hence, the representation \eqref{eq:spline_exp_fin} implies that the function $S_f^{\lbeta, K}$ can be represented by a neural network with
\[
    4 + 2(\lbeta-2) + \lceil \log_{2}{d} \rceil + 2\left( \left\lceil \log_2 (2d\lbeta + d) \vee \log_2 \log_2 H \right\rceil \vee 1 \right) + 2
\]
hidden layers and the architecture
\[
    \left( d, d\A_1, (K + \lbeta)^{d}\A_2, \underbrace{5(K + \lbeta)^{d}, 5(K + \lbeta)^{d}, \dots, 5(K + \lbeta)^{d}}_{\text{$2\left( \left\lceil \log_2 (2d\lbeta + d) \vee \log_2 \log_2 H \right\rceil \vee 1 \right) + 1$ times}}, 4(K + \lbeta)^{d}, p \right),
\]
where $\A_1$ and $\A_2$ are given by \eqref{eq:A_1_def} and \eqref{eq:A_2_def}, respectively. As before, weights of the constructed neural network are bounded by $1$.
Note that this network contains at most
\begin{align*}
    &
    p (K + \lbeta)^{d} \left( 60 \left( \left\lceil \log_2 (2d\lbeta + d) \vee \log_2 \log_2 H \right\rceil \vee 1 \right) + 38 \right)
    \\&\quad
    + 20 d^2(K+\lbeta)^{d}
    + 72 d \lbeta (K + 2\lbeta )
    + 8 d (K + \lbeta)^{d}
\end{align*}
non-zero weights. The last summand appears, because each of 
\[
    (K + \lbeta)^d 2^{\lceil \log_{2}{d} \rceil + 1} \leq 4d (K + \lbeta)^d
\]
neurons in the first layer of $(K + \lbeta)^d \A_2$ receives information from two neurons from the previous layer.
This completes the proof.
	
\subsection{Auxiliary lemmas for \Cref{prop:requ_approx}}
\begin{Lem}
\label{lem:requ_prod}
Let $k \in \nset, k \geq 2$. Then, for any $x = (x_1,\dots,x_k) \in \R^k$, there exists a neural network from the class
\begin{equation}
\label{eq:NN_class_product}
\NN\left(\lceil \log_{2}{k} \rceil , \left(k, 2^{\lceil \log_{2}{k} \rceil + 1}, 2^{\lceil \log_{2}{k} \rceil}, \dots, 4, 1\right)\right)\,,
\end{equation}
which implements the map $x \mapsto x_1x_2,\dots x_k$. Moreover, this network contains at most $5 \cdot 2^{2\lceil \log_{2}{k} \rceil}$ non-zero weights.
\end{Lem}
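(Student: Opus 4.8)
The plan is to compute $x_1\cdots x_k$ by a balanced binary tree of pairwise multiplications, devoting one hidden (ReQU) layer to each level of the tree. The elementary building block is the observation that, since $\sigma(t)+\sigma(-t)=t^2$ for every $t\in\R$, a group of four ReQU units with no bias reproduces the bilinear map exactly:
\[
 uv=\tfrac14\bigl[(u+v)^2-(u-v)^2\bigr]=\tfrac14\bigl[\sigma(u+v)+\sigma(-u-v)-\sigma(u-v)-\sigma(-u+v)\bigr],
\]
with all weights involved lying in $\{-1,-\tfrac14,\tfrac14,1\}$. The same algebra yields an exact \emph{identity gadget} $y=\tfrac14[(y+1)^2-(y-1)^2]=\tfrac14[\sigma(y+1)+\sigma(-y-1)-\sigma(y-1)-\sigma(-y+1)]$, again on four ReQU units, now with biases in $\{-1,0,1\}$; it will be used to carry a value unchanged through a layer when it cannot be paired.

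Setting $L=\lceil\log_2 k\rceil$, I would build the tree as follows: at level $i\in\{1,\dots,L\}$, the $n_{i-1}$ values present are split into $\lfloor n_{i-1}/2\rfloor$ pairs (fed to multiplication gadgets) plus at most one leftover (fed to an identity gadget), producing $n_i=\lceil n_{i-1}/2\rceil$ values. From $n_0=k\leq 2^L$ one gets $n_i\leq 2^{L-i}$ by induction, hence $n_L=1$, so $L$ levels suffice. Level $i$ uses $m_i=4n_i\leq 2^{L+2-i}$ ReQU units, so $m_1\leq 2^{L+1}$ and $m_L=4$; after padding the remaining units of the prescribed layers by identically-zero neurons, the network has exactly the architecture of \eqref{eq:NN_class_product}. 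The affine map $W_i$ inserted between levels $i$ and $i+1$ first recombines, with coefficients $\pm\tfrac14$, the four post-activations of each level-$i$ gadget into the value it produces, and then distributes these values (with coefficients $\pm1$) into the level-$(i+1)$ gadgets; being a composition of linear maps, $W_i$ is a single weight matrix with entries in $\{-\tfrac14,0,\tfrac14\}$, and the accompanying biases lie in $\{-1,0,1\}$. Likewise $W_0$ (entries in $\{-1,0,1\}$) feeds the inputs into level $1$, and $W_L:\R^4\to\R$ recombines the last four post-activations into $x_1\cdots x_k$ with coefficients $\pm\tfrac14$. All weights and biases thus lie in $[-1,1]$, and by exactness of the two gadgets the resulting network equals $x\mapsto x_1x_2\cdots x_k$ on all of $\R^k$.

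For the weight count I would use that every neuron of the first hidden layer reads at most two input coordinates, so $\|W_0\|_0\leq 2m_1$, together with the trivial bounds $\|W_\ell\|_0\leq p_\ell p_{\ell+1}$ and $\|v_\ell\|_0\leq p_\ell$, and then sum the resulting geometric series along the architecture $(k,2^{L+1},2^L,\dots,4,1)$; this gives the stated bound $5\cdot2^{2\lceil\log_2 k\rceil}$, with the single case $L=1$ (that is, $k=2$) checked directly. I expect the only genuinely delicate point of the argument to be reconciling the requirement that all weights lie in $[-1,1]$ with the need to push unpaired values through ReQU layers: this is exactly what the four-unit identity gadget achieves, since it is exact on all of $\R$, whereas the cheaper two-unit formula $y=\tfrac1{4a}[\sigma(y+a)-\sigma(a-y)]$ is valid only on $\{|y|\leq a\}$ and would force the weight $\tfrac1{4a}$ and the bias $a$ to scale with the (a priori unbounded) magnitude of the intermediate products.
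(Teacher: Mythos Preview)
Your proposal is correct and follows the same strategy as the paper: a balanced binary tree of pairwise products, each realized exactly by four ReQU units via the polarization identity $uv=\tfrac14[(u+v)^2-(u-v)^2]$, with the weight bound obtained by summing a geometric series along the architecture $(k,2^{L+1},2^L,\dots,4,1)$. The only difference is in handling $k$ not a power of two: the paper pads the input tuple with $2^{\lceil\log_2 k\rceil}-k$ copies of $1$ (so every level of the tree is perfectly balanced and only multiplication gadgets are needed), whereas you carry the single leftover value through successive layers with an explicit four-unit identity gadget; your variant is more explicit about why all weights and biases stay in $[-1,1]$, but the two constructions are otherwise equivalent and yield the same $5\cdot 2^{2\lceil\log_2 k\rceil}$ bound.
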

\begin{proof}[Proof of \Cref{lem:requ_prod}] Let $x = (x_1,x_2) \in \R^2$. We first prove that the map $(x_1, x_2) \mapsto x_1 x_2$ belongs to the class $\NN(1, (2, 4, 1))$. Let us define $W_1 = \frac{1}{4}(1,-1,-1,1)^{\top}$ and
\begin{equation*}
W_0 = 
\begin{pmatrix}
    1 & 1 \\
    1 & -1 \\
    -1 & 1 \\
    -1 & -1
\end{pmatrix}\,.
\end{equation*}
Then it is easy to see that
\begin{equation}
\label{eq:product_of_two_vars}
\begin{split}
    W_1 \circ \requ \circ W_0 x
    &
    \equiv \frac{1}{4} \left(\requ(x_1 + x_2) + \requ(-x_1 - x_2) - \requ(x_1 - x_2) - \requ(x_2-x_1) \right)
    \\&
    = \frac{1}{4} \left((x_1 + x_2)^2 - (x_1 - x_2)^2\right)
    = x_1 x_2.
\end{split}
\end{equation}
Now, for any $k \geq 2$, we use the following representation
\begin{equation}
\label{eq:product_representation}
x_1 x_2 \dots x_{k} = x_1 x_2 \dots x_{k} \cdot \underbrace{1 \cdot \ldots \cdot 1}_{2^{\lceil \log_{2}{k} \rceil}-k}\,.
\end{equation}
Put $v = \lceil \log_{2}{k} \rceil$. Then it remains to note that, based on the equality \eqref{eq:product_of_two_vars}, for any $v \in \nset$, we can implement a sequence of mappings
\[
    \left(a_1,a_2,\ldots,a_{2^v}\right) \mapsto \left(a_1a_2,a_3a_4,\ldots,a_{2^v-1}a_{2^v}\right) \mapsto \ldots \mapsto \prod_{i=1}^{2^v}a_i
\]
by a neural network from the class
\[
    \NN\left(v, \left(2^{v}, 2^{v + 1}, 2^v, 2^{v-1}, 2^{v-2}, \dots, 4, 1\right)\right)\,.
\]
The number of parameters in such network does not exceed $2^{2v + 1} + 2^{2v + 1} + 2^{2v} + \ldots + 4 \leq 5 \cdot 2^{2v}$. We complete the proof, combining this bound with \eqref{eq:product_representation}.

\end{proof}

\begin{Lem}
\label{lem:b-spline_representation_quadratic}
Let $q, K $ be integers not smaller than $2$, and $x \in [0, 1]$. Then the mapping 
\[
    x \mapsto \left(x,K,B_1^{2, K}(x),B_2^{2, K}(x),\ldots,B_{K+2q-2}^{2, K}(x)\right),
\]
can be represented by a network from the class $\NN \Big(4,\big(1, \mathcal{B}_{2}, K + 2q \big) \Big)$, where
\begin{equation}
\label{eq:architecture_spline_third_degree}
\mathcal{B}_{2} = \left(4(K+2q-1) + K, 4K + 8q, 4K + 8q, 4K + 8q\right),
\end{equation}
containing at most $72(K + 2q)$ non-zero weights.
\end{Lem}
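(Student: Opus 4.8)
The plan is to write each order-$2$ B-spline explicitly as a short linear combination of shifted ReQU units and then reorganize that combination into a network whose weights all lie in $[-1,1]$. Recall that $B_j^{2,K}$ is a $C^1$ piecewise-quadratic function supported on at most three consecutive knot intervals, hence completely determined by the truncated quadratics attached to the knots $a_j,a_{j+1},a_{j+2},a_{j+3}$. Using the identity $(x-a)_+^2=\sigma(x-a)$, the standard truncated-power representation gives, for an interior B-spline (knots of spacing $1/K$),
\[
    B_j^{2,K}(x)=c_{j}\bigl(\sigma(x-a_j)-3\,\sigma(x-a_{j+1})+3\,\sigma(x-a_{j+2})-\sigma(x-a_{j+3})\bigr),
\]
where $c_j$ is the normalization factor coming from \eqref{eq:b-spline_recursion}/\eqref{eq:norm_b-spline}, which is a positive power of $K$ and in particular $>1$; for the $O(1)$ B-splines adjacent to $0$ or $1$ three of the boundary knots coincide, the formula degenerates, and $B_j^{2,K}$ becomes a combination of fewer than four truncated quadratics, handled separately but in the same way. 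The coefficients $\pm1,\pm3$ and the scale $c_j$ are the only quantities not automatically bounded by $1$, and dealing with them is exactly what dictates the depth and width of the network.

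First I would use the first hidden layer to produce, with weight $1$ on $x$ and bias $-a_k\in[-1,0]$, the ReQU units $\sigma(x-a_k)$ for every knot $a_k$, together with a bounded number of identical copies of each such unit (so that a coefficient like $-3$ can later be realized as three edges of weight $-1$ from distinct source neurons); this is what accounts for the factor $4$ in the width $4(K+2q-1)+K$ of $\mathcal B_2$. In the same layer I would also place two neurons realizing the identity $x=\tfrac14(\sigma(x+1)-\sigma(1-x))$ on $[0,1]$, so $x$ can be forwarded through all layers with bounded weights, and $K$ neurons each outputting $\sigma(1)=1$, whose sum in the next layer reproduces the constant $K$ (the extra $+K$ term). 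In the second layer I would assemble $K$, form the un-normalized combinations $\sigma(x-a_j)-3\sigma(x-a_{j+1})+3\sigma(x-a_{j+2})-\sigma(x-a_{j+3})$ using only weights in $\{-1,0,1\}$, and keep forwarding $x$. The remaining two layers build the scaling value $c_j$ from $K$ by one ReQU multiplication (in the style of \eqref{eq:product_of_two_vars} and \Cref{lem:requ_prod}, \Cref{lem:requ_mult}) and then multiply each un-normalized combination by it via a further ReQU multiplication, finally emitting $(x,K,B_1^{2,K}(x),\dots,B_{K+2q-2}^{2,K}(x))$. This uses $4$ hidden layers and widths of order $K$, matching $\mathcal B_2$.

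The last step is bookkeeping: all weights and biases introduced above lie in $[-1,1]$; the identity- and constant-building gadgets contribute $O(K)$ non-zero weights, while each B-spline contributes only $O(1)$ non-zero weights in the assembly and in the two multiplications, so the total is $O(K)$, and one checks that the explicit count is at most $72(K+2q)$. I expect the only genuine obstacle to be the weight-boundedness constraint itself: it rules out the naive one-layer implementation (a single ReQU hidden layer with weights of size $c_j=\mathrm{poly}(K)$ and coefficients $\pm3$) and forces both the large normalization and the coefficients $\pm3$ to be realized through replicated neurons and ReQU multiplications together with the on-the-fly construction and forwarding of the constant $K$, which is precisely what inflates the architecture to the depth-$4$, $\Theta(K)$-wide form claimed; verifying that the same routing still applies to the few boundary B-splines, where several knots collapse, is routine but must be done explicitly.
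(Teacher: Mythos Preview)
Your approach is in the same spirit as the paper's --- truncated-power representation of the $B_j^{2,K}$ plus ReQU products to handle the large normalization --- but the layer accounting has a genuine gap.

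The normalization constant for an interior spline is $c_j = K^3/6$ (and $K^3/4$ or $K^3$ at the boundary), i.e.\ \emph{cubic} in $K$. Your plan allots only two layers after assembly: ``one ReQU multiplication'' to build $c_j$ from $K$, then ``a further ReQU multiplication'' to apply it. A single ReQU layer applied to $K$ yields $K^2$, and a single product of available quantities also yields at most $K^2$; there is no way to obtain $K^3$ (or $K^3\cdot u$) from $K$ and $u$ in two hidden layers. Moreover, your ``second layer'' that ``assembles $K$'' and the un-normalized combinations is a purely linear step: if you treat it as a full hidden layer, the ReQU that follows squares the assembled values and destroys them; if you treat it as a bare linear map, it merges with the next layer's linear part and does not count as a separate hidden layer. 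Either way you are one layer short.

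The paper resolves both issues simultaneously. It computes $B_j^{2,K}/K^3$ (rather than your un-normalized combination) directly in the first hidden layer; after dividing by $K^3$ the coefficients become $\pm\tfrac16,\pm\tfrac12,\pm\tfrac34,\pm1$, all in $[-1,1]$, so the replication trick for $\pm3$ is unnecessary. Together with the identity gadget for $x$ and $K$ copies of $\sigma(1)=1$ for the constant, this gives the first hidden width $4(K+2q-1)+K$. The remaining three hidden layers each multiply every spline value by $K$ once (via the four-unit product formula) while forwarding $x$ and $K$, yielding widths $4(K+2q)=4K+8q$ and producing $B_j^{2,K}$ after the third multiplication. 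That is exactly four hidden layers with the architecture $\mathcal B_2$ claimed.
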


\begin{proof}
Note first that, due to \citep[Theorem 4.32]{schumaker07},
\begin{align*}
    B_j^{2, K}(x)
    &
    = \frac{K^3}6 
    \left( \left(x - \frac{j - q - 1}K \right)_+^2 - 3\left(x - \frac{j - q}K \right)_+^2 \right.
    \\&\quad
    \left. + 3\left(x - \frac{j - q + 1}K \right)_+^2 - \left(x - \frac{j - 
    q + 2}K \right)_+^2  \right), \quad q + 1 \leq j \leq q + K - 2.
\end{align*}
The functions $B_{q-1}^{2, K}$, $B_q^{2, K}$, $B_{q + K - 1}^{2, K}$, and $B_{q + K}^{2, K}$ can be computed directly using \eqref{eq:b-spline_recursion}. Indeed, for any $x \in [0, 1]$, 
\begin{align*}
B_{q-1}^{2, K}(x) &= K^3 \left( \frac1K - x \right)_+^2,\\
B_q^{2, K}(x) &= \frac{K^3}4 \left( \left( \frac2K - x \right)_+^2 - 4 \left( \frac1K - x \right)_+^2 + 
	3(-x)_+^2 \right),
	\\
	B_{q+K-1}^{2, K}(x)
	&
	= \frac{K^3}4 \left( \left( x  - \frac{K-2}K \right)_+^2 - 2 \left( x - \frac{K-1}K 
	\right)_+^2 - 3(x - 1)_+^2 \right),
	\\
	B_{q+K}^{2, K}(x)
	&
	= K^3 \left( x - \frac{K-1}K \right)_+^2\,.
\end{align*}
Moreover, \eqref{eq:b-spline_recursion} implies that $B_{1}^{2, K}(x) = \dots = B_{q-2}^{2, K}(x) = 0$ and  $B_{q+K+1}^{2, K}(x) = \ldots = B_{2q+K-2}^{2, K}(x) = 0$. Hence, each of the functions $B_j^{2, K}/K^3, j \in \{1,\dots,2q+K-2\}$ can be exactly represented by a neural network from the class $\NN(1, (1, 4, 1))$.
The final mapping (b) is implemented as $ K^3 = \frac14 (K^2 + K)^2 - \frac14 (K^2 - K)^2$. Note that the identity map $x \mapsto x$ belongs to $\NN(1, (1, 4, 1))$ too:
\begin{equation}
\label{eq:ident_map_repr}
x = \left( (x+1)^2 - (x-1)^2 \right)/4 = \left( \requ_{-1}(x) + \requ_{-1}(-x) - \requ_1(x) - \requ_1(-x)\right)/4.
\end{equation}
Combining the arguments above, we conclude that the mapping 
\begin{equation}
\label{eq:map_norm_splines}
x \mapsto \left(x,B_1^{2, K}(x)/K^3,B_2^{2, K}(x)/K^3,\ldots,B_{K+2q-2}^{2, K}(x)/K^3\right)
\end{equation}
can be exactly represented by a network from the class $\NN(1, (1, 4(K+2q-1), K+2q-1))$.
Besides, one can implement the sequence of transformations
\begin{equation}
    \label{eq:map_k}
    x
    \mapsto \underbrace{(1,\dots,1)}_{K \text{times}} \mapsto K,
\end{equation}
using a neural network from $\NN\big(1, (1, K, 1 \big)$.
Connecting in parallel the neural networks realizing the maps \eqref{eq:map_norm_splines} and \eqref{eq:map_k}, we obtain that one can implement the transformation
\begin{equation}
    \label{eq:map_k_norm_splines}
    x \mapsto \left(x, K, B_1^{2, K}(x)/K^3,B_2^{2, K}(x)/K^3,\ldots,B_{K+2q-2}^{2, K}(x)/K^3\right)
\end{equation}
via a neural network from the class $\NN\big(1, (1, 4(K+2q-1) + K, K+2q), 13(K+2q-1) + 3K + 1\big)$.

It remains to implement multiplication of the splines by $K^{3}$ to complete the proof. Recall that, according to Lemma \ref{lem:requ_prod}, the function, mapping a pair $(x, y)$ to their product $xy$, belongs to the class $\NN(1, (2, 4, 1))$. 
For any $x \in \R$, consider a vector
\[
    \left(x, K, B_1^{2, K}(x)/K^3,B_2^{2, K}(x)/K^3,\ldots,B_{K+2q-2}^{2, K}(x)/K^3\right).
\]
Let us implement the multiplications of the splines $B_1^{2, K}(x)/K^3,B_2^{2, K}(x)/K^3, \ldots, B_{K+2q-2}^{2, K}(x)/K^3$ by $K$ in parallel. Then we obtain that the mapping
\begin{align*}
    &
    \left(x, K, B_1^{2, K}(x)/K^3,B_2^{2, K}(x)/K^3,\ldots,B_{K+2q-2}^{2, K}(x)/K^3\right)
    \\&
    \mapsto \left(x, K, B_1^{2, K}(x)/K^2,B_2^{2, K}(x)/K^2,\ldots,B_{K+2q-2}^{2, K}(x)/K^2\right)
\end{align*}
is in the class $\NN\big(1, (K + 2q, 4K + 8q, K + 2q), 17(K + 2q) - 8 \big)$. Here we took into account that we need $17$ non-zero weights to implement each of $(K + 2q - 2)$ multiplications and $13$ non-zero weights to implement the identity maps $x \mapsto x$ and $K \mapsto K$. Repeating the same trick two more times, we get that there is a neural network in $\NN\big(3, (K + 2q, 4K + 8q, 4K + 8q, 4K + 8q, K + 2q), 57(K + 2q) - 8 \big)$,
implementing 
\begin{equation}
    \label{eq:map_k_cube_mult}
    \begin{split}
    &
    \left(x, K, B_1^{2, K}(x)/K^3,B_2^{2, K}(x)/K^3,\ldots,B_{K+2q-2}^{2, K}(x)/K^3\right)
    \\&
    \mapsto \left(x, K, B_1^{2, K}(x),B_2^{2, K}(x),\ldots,B_{K+2q-2}^{2, K}(x)\right).
    \end{split}
\end{equation}
This follows from the fact that each of $2K + q - 2$ multiplications by $K^3$, running in parallel, can be implemented by a neural network from $\NN(3, (2, 4, 4, 4, 1))$, and two identity maps $x \mapsto x$ and $K \mapsto K$ can be represented by a network from $\NN(3, (1, 4, 4, 4, 1))$. 
Concatenating the neural networks, performing \eqref{eq:map_k_norm_splines} and \eqref{eq:map_k_cube_mult}, we finally get that
the mapping of interest,
\[
    x \mapsto \left(x,K,B_1^{2, K}(x),B_2^{2, K} (x), \ldots, B_{K+2q-2}^{2, K}(x)\right),
\]
is in the class of neural networks from $\NN\big(3, (1, 4(K+2q-1) + K, 4K + 8q, 4K + 8q, 4K + 8q, K + 2q)\big)$, containing at most
\[
    69(K + 2q) - 20 + 3K 
    < 72(K + 2q)
\]
non-zero weights.
\end{proof}

\begin{Lem}
\label{lem:b-spline_representation} Let $x \in \R$ and $K,q \in \nset, K,q \geq 2$. Then for any $j \in \{1,\dots,q+K\}$, the mapping $x \mapsto \left(x,K,B_1^{q, K}(x),B_2^{q, K}(x),\ldots,B_{q+K}^{q, K}(x)\right)$ belongs to the class
\begin{equation*}
    \NN\Big(4 + 2(q-2),
    \big(1, \mathcal{B}_{2}, \mathcal{B}_{3}, \ldots, \mathcal{B}_{q}, K + q + 2\big)\Big)\,,
\end{equation*}
with at most $72q(K+2q)$ non-zero weights. The vector $\mathcal{B}_{2}$ is defined in \eqref{eq:architecture_spline_third_degree}, and for $m \in \{3,\ldots,q\}$, $\mathcal{B}_{m}$ is equal to 
\begin{equation}
\label{eq:B_spline_m_order}
\mathcal{B}_{m} = \bigl(12(K + 2q - m) + 12, 8(K + 2q - m) + 8\bigr)\,.
\end{equation}
\end{Lem}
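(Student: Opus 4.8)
The plan is to prove the lemma by induction on the order, building the order-$m$ B-splines on the fixed knot vector $a$ of \eqref{eq:knots} for $m = 2, 3, \dots, q$; the case $m = q$ is the assertion. The base case $m = 2$ is exactly \Cref{lem:b-spline_representation_quadratic}, which already supplies a network in $\NN\big(4, (1, \mathcal{B}_2, K + 2q)\big)$ with at most $72(K + 2q)$ non-zero weights whose output carries the channels $x$, $K$ and $B_1^{2,K}(x), \dots, B_{K+2q-2}^{2,K}(x)$. For the step $m - 1 \to m$ (with $3 \leq m \leq q$) I invoke the Cox--de Boor recursion \eqref{eq:b-spline_recursion}, which, up to the index shift induced by the one-based knot numbering, writes each order-$m$ B-spline as
\[
    B_j^{m,K}(x) = \frac{x - a_{j}}{a_{j+m} - a_{j}}\, B_j^{m-1,K}(x) + \frac{a_{j+m+1} - x}{a_{j+m+1} - a_{j+1}}\, B_{j+1}^{m-1,K}(x),
\]
i.e.\ as a combination of two consecutive lower-order B-splines with coefficients that are affine in $x$. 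By \eqref{eq:knots} the relevant knot gaps equal $c/K$ with $c \in \{1, \dots, m\}$ (terms with a vanishing gap being dropped by convention), so each coefficient has the form $\tfrac1c(Kx - K a_i)$ with $a_i \in [0,1]$ and $Ka_i$ an integer; the only obstruction to realising it with weights in $[-1,1]$ is the factor of $K$, which is handled exactly as in the proof of \Cref{lem:b-spline_representation_quadratic}, where $K$ is carried as a computed value and one multiplies by it using a ReQU product gadget.

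Concretely, to the order-$(m-1)$ network I append two hidden layers. In the first I use the two-factor ReQU product gadget of \Cref{lem:requ_prod} (the map $(u,v)\mapsto uv$ in $\NN(1,(2,4,1))$), running one copy per admissible $j$ in parallel, to form the pairwise products $(x - a_j)\,B_j^{m-1,K}(x)$ and $(a_{j+m+1} - x)\,B_{j+1}^{m-1,K}(x)$ — the affine prefactors being absorbed into the gadgets' first weight matrices — while passing $x$ and $K$ forward through the ReQU identity realisation \eqref{eq:ident_map_repr}. In the second layer I call the product gadget again to multiply each new spline's combined bracket $\tfrac1c u_j - \tfrac1{c'}v_j$ (a linear function of the previous layer's outputs) by the carried value $K$, which yields $B_j^{m,K}(x)$, again carrying $x$ and $K$ along. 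This contributes exactly two layers, and with the widths chosen generously so as to accommodate the $2(K+2q-m)$ pairwise products, the subsequent multiplications by $K$, and the two identity channels, one can fit the block $\mathcal{B}_m$ of \eqref{eq:B_spline_m_order}. Hence after the step the depth has grown by $2$ and the architecture has gained one $\mathcal{B}_m$ entry, so at $m = q$ one arrives at $4 + 2(q-2)$ hidden layers and architecture $(1, \mathcal{B}_2, \mathcal{B}_3, \dots, \mathcal{B}_q, K + q + 2)$; every newly introduced weight is a product of gadget weights $\leq 1$, knot constants $a_i \in [0,1]$ and reciprocals $1/c \leq 1$, hence lies in $[-1,1]$.

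For the weight count, the step at order $m$ adds $O(1)$ gadget weights for each of the $K + 2q - m$ B-splines at that level, together with $O(1)$ weights for the two identity channels; bounding this crudely by $72(K + 2q)$ and summing over $m \in \{2, \dots, q\}$ gives the claimed total of at most $72\,q\,(K + 2q)$ non-zero weights. The part I expect to be the main obstacle is purely organisational: fitting the construction into the precise widths $\mathcal{B}_m$ — in particular handling the overlap, since each $B_j^{m-1,K}$ is consumed by two different order-$m$ B-splines, which forces either a controlled duplication of channels in the weight matrices or a careful reuse of product outputs — and checking that propagating $x$ and the (large) integer $K$ as computed quantities through the ReQU layers keeps all intermediate activations finite and, crucially, never requires a weight exceeding $1$.
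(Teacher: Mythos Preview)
Your inductive strategy is essentially the paper's: base case from \Cref{lem:b-spline_representation_quadratic}, then for each $m\in\{3,\dots,q\}$ append two ReQU layers that realise the recursion \eqref{eq:b-spline_recursion} via the product gadget of \Cref{lem:requ_prod}, while threading $x$ and $K$ through using the ReQU identity \eqref{eq:ident_map_repr}; the crude per-step bound $72(K+2q)$ summed over $m$ gives the claimed total. Two minor corrections and one remark. First, the recurrence you wrote is the one for the \emph{normalised} splines $N_j^{m,K}$; for the unnormalised $B_j^{m,K}$ of \eqref{eq:b-spline_recursion} both terms share the single denominator $a_{j+m+1}-a_j$, and the sign in your bracket should be $+$. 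Second, the paper orders the two multiplications the other way around: in the first added layer it uses the product gadget on $(x,K)$ to form the coefficients $\frac{x-a_j}{a_{j+m+1}-a_j}$ and $\frac{a_{j+m+1}-x}{a_{j+m+1}-a_j}$ while also passing $x$, $K$, and all $B_j^{m-1,K}$ through identities, and only in the second layer multiplies these coefficients by the old splines. That ordering is what produces precisely the widths $\mathcal{B}_m=(12(K+2q-m)+12,\,8(K+2q-m)+8)$; your ordering yields the smaller widths $(8(K+2q-m)+8,\,4(K+2q-m)+8)$, which still fit inside $\mathcal{B}_m$ by zero-padding, so the conclusion is unaffected.
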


\begin{proof}[Proof of \Cref{lem:b-spline_representation}] Fix an integer $q \geq 2$. We consider the family of $B$-splines $\bigl\{B_j^{m, K}(x), m \in \{2,\dots,q\}, j \in \{1,\dots,K+2q-m\}\bigr\}$ and construct all elements of this family sequentially starting from $m = 2$. By \Cref{lem:b-spline_representation_quadratic}, the map 
\[
x \mapsto \left(x,K,B_1^{2, K}(x),B_2^{2, K}(x),\ldots,B_{K+2q-2}^{2, K}(x)\right)
\]
can be represented by a network in the class \eqref{eq:architecture_spline_third_degree}, containing at most $72(K+2q)$ non-zero weights. Assume that for some $m \geq 3$ the mapping
\[
    x \mapsto \left(x,K,B_1^{m-1, K}(x),\ldots,B_{K+2q-(m-1)}^{m-1, K}(x)\right)
\]
belongs to the class 
\begin{align}
\label{eq:nn_m_th_order_splines}
    \NN\Big(4 + 2(m-3),
    \big(1, \mathcal{B}_{2}, \mathcal{B}_{3}, \ldots, \mathcal{B}_{m-1}, K+2q-m+3 \big)\Big).
\end{align}
We use the recursion formula \eqref{eq:b-spline_recursion} to perform the induction step.
Note that we can implement each of the mappings
\begin{equation}
    \label{eq:aux_products}
    (x,K) \mapsto \frac{x-a_j}{a_{j+m+1}-a_j}\,,
    \quad
    (x,K) \mapsto \frac{a_{j+m+1} - x}{a_{j+m+1}-a_j}\,,
    \quad
    j \in \{1, \dots, 2q + K - m\}\,,
\end{equation}
by networks from $\NN(1,(2,4,1))$. It is possible, since for $j$ and $m$ satisfying $a_{j+m+1} - a_{j} > 0$, it holds that $a_{j+m+1} - a_{j} \geq 1/K$. We remove the last linear layer of \eqref{eq:nn_m_th_order_splines}, and concatenate the remaining part with the first layer of networks, implementing \eqref{eq:aux_products}. We obtain a network with $5+2(m-3)$ hidden layers and the architecture 
\begin{equation}
\label{eq:product_spline_net}
\NN\Big(5 + 2(m-3),
    \big(1, \mathcal{B}_{2}, \mathcal{B}_{3}, \ldots, \mathcal{B}_{m-3},
    12(K+2q-m) + 12, 3(K+2q-m)+3 \big)\Big)\,,
\end{equation}
which implements the mapping
\begin{equation}
\label{eq:intermediate_nn_lem_15}
x \mapsto \left(x,K,B_1^{m-1, K}(x),\ldots,B_{K+2q-(m-1)}^{m-1, K}(x), \underbrace{\frac{x-a_j}{a_{j+m+1}-a_j}, \frac{a_{j+m+1} - x}{a_{j+m+1}-a_j}}_{j \in \{1, \dots, 2q + K - m\}} \right)\,.
\end{equation}
Note that we have added at most 
\[
\underbrace{4(K+2q-m+3)}_{\text{to implement identity maps}} + \underbrace{16 \cdot 2(K+2q-m)}_{\text{to implement \eqref{eq:aux_products}}} = 36(K+2q-m) + 12
\] 
non-zero weights, since each linear function of the form \eqref{eq:aux_products} can be implemented via a network from $\NN\big(1,(2,4,1)\big)$ with at most $16$ non-zero weights. According to \eqref{eq:b-spline_recursion}, for any $j \in \{1, \dots, 2q + K - m\}$, we construct
\begin{equation}
\label{eq:b-spline_recursion-main}
B_j^{m, K}(x) =
\begin{cases}
    \frac{(x - a_j)B_j^{m-1, K}(x) + (a_{j+m+1} - x)B_{j+1}^{m-1, K}(x)}{a_{j+m+1} - 
	a_j},\\
	\qquad \qquad \qquad \qquad \qquad \text{if $a_j < a_{j+m+1}, a_j \leq x < a_{j+m+1}$},\\
    0, \quad \text{otherwise}.
\end{cases}
\end{equation}
Now we add one more hidden layer with $8(K + 2q - m) + 8$ parameters to the network \eqref{eq:intermediate_nn_lem_15}, yielding a network with an architecture
\begin{equation}
\label{eq:b_spline_intermediate}
\begin{split}
& \NN\Big(6 + 2(m-3),
    \big(1, \mathcal{B}_{2}, \mathcal{B}_{3}, \ldots, \mathcal{B}_{m-1},
    12(K+2q-m) + 12, \\
&\qquad \qquad \qquad \qquad \qquad 8(K + 2q - m) + 8, 2(K+2q-m)+2 \big)\Big)\,,
\end{split}
\end{equation}
which implements
\[
x \mapsto \left(x,K, \underbrace{\frac{x-a_j}{a_{j+m+1}-a_j}B_j^{m-1, K}(x), \frac{a_{j+m+1} - x}{a_{j+m+1}-a_j}B_{j+1}^{m-1, K}(x)}_{j \in \{1, \dots, 2q + K - m\}} \right).
\]
Note that adding the new hidden layer adds at most 
\[
    \underbrace{8}_{\text{to implement identity maps for $x$ and $K$}} + \underbrace{16 \cdot 2(K+2q-m)}_{\text{to implement \eqref{eq:b-spline_recursion}}}
\]
additional non-zero-weights. Combining the above representations and \eqref{eq:b-spline_recursion-main}, we obtain a network from the class
\begin{equation*}
\begin{split}
    \NN\Big(4 + 2(m-2),
    &
    \big(1, \mathcal B_2, \mathcal B_3, \dots, \mathcal B_m, K+2q-m+2 \big)\Big)
\end{split}
\end{equation*}
with at most
\begin{align*}
    &
    72(K + 2q) + \sum\limits_{s=3}^{m} \Big[68(K + 2q - s) + 20\Big]
    \\&
    \leq 72(K + 2q) + (m - 2) \Big[68(K + 2q) + 20\Big]
    \\&
    \leq 72(K + 2q) + (m - 2) \cdot 72 (K + 2q)
    \\&
    < 72 m (K + 2q)
\end{align*}
non-zero weights.
\end{proof}

\begin{Lem}
    \label{lem:requ_mult}
    Let $x \in \R$ and let $L \in \mathbb N$. Then, for any $M$, such that $|M| \leq 4^{4^L}$, the mapping $x \mapsto Mx$ can be represented by a neural network, belonging to the class
    \[
        \NN\Big( 2L + 2, (1, 5, 5, \dots, 5, 4, 1) \Big)
    \]
    and containing at most $60 L + 38$ non-zero weights.
\end{Lem}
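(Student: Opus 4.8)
The plan is to realise $x\mapsto Mx$ by building, \emph{inside} the network, an auxiliary quantity $\Gamma x$ with a known constant $\Gamma\ge 4^{4^L}$, and then rescaling by the single scalar $M/\Gamma$ in the output layer; since $|M|\le 4^{4^L}\le\Gamma$ we have $|M/\Gamma|\le 1$, so this will simultaneously take care of an arbitrary sign of $M$ and of the case $|M|\le 1$ without any case distinction. The key point — and the reason the constraint $\|W\|_\infty\le 1$ is not an obstruction — is that a ReQU layer can output the \emph{square} of a neuron value at no cost in weight size (the large quantity is an activation, never a weight), so a constant can be made to grow doubly exponentially along the depth.

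Concretely I would propagate through the hidden layers a pair of scalars, an amplified value $Y_k$ and a constant $C_k$. The first (width-$5$) layer initialises $Y_1=4x$ and $C_1=1$ by means of the four neurons $\sigma(x+1),\sigma(-x-1),\sigma(x-1),\sigma(-x+1)$, whose $(1,1,-1,-1)$-combination equals $(x+1)^2-(x-1)^2=4x$ (all weights in $\{-1,0,1\}$), together with a fifth neuron $\sigma(1)=1$. For each of the next $2L$ width-$5$ layers I would use four neurons $\sigma(\pm Y_{k-1}\pm C_{k-1})$ — reading $Y_{k-1}$ as the $(1,1,-1,-1)$-combination of the previous layer's first four neurons and $C_{k-1}$ as its fifth neuron, so that all feeding weights lie in $\{-1,0,1\}$ — so that the next layer can recover $Y_k:=4\,Y_{k-1}C_{k-1}=(Y_{k-1}+C_{k-1})^2-(Y_{k-1}-C_{k-1})^2$, again by a $\{-1,0,1\}$-combination; and the fifth neuron computes $C_k:=\sigma(C_{k-1}+1)=(C_{k-1}+1)^2$, the only nonzero bias being $-1$. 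The last, width-$4$ layer is once more the product gadget $\sigma(\pm Y_{2L+1}\pm C_{2L+1})$, and the output neuron returns $\tfrac{M}{\Gamma}\bigl[(Y_{2L+1}+C_{2L+1})^2-(Y_{2L+1}-C_{2L+1})^2\bigr]=\tfrac{M}{\Gamma}\cdot 4\,Y_{2L+1}C_{2L+1}$, where $\Gamma$ is the explicit constant defined by $4\,Y_{2L+1}C_{2L+1}=\Gamma x$. Then every weight and bias is in $\{-1,0,1\}$ except the four output weights, which equal $M/\Gamma\in[-1,1]$.

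It then remains to check the growth bound $\Gamma\ge 4^{4^L}$. From $C_1=1$, $C_2=4$ and $C_k=(C_{k-1}+1)^2\ge C_{k-1}^2$ one gets $C_k\ge 4^{2^{k-2}}$ for $k\ge 2$; from $Y_1=4x$ and $Y_k=4\,Y_{k-1}C_{k-1}$ one gets $Y_k=4^k x\prod_{i=1}^{k-1}C_i\ge 4^{\,k-1+2^{k-2}}x$, whence $\Gamma=4\,Y_{2L+1}C_{2L+1}/x\ge 4^{\,1+2L+2^{2L-1}}\cdot 4^{2^{2L-1}}=4^{\,1+2L+2^{2L}}\ge 4^{2^{2L}}=4^{4^L}$. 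The rest is routine bookkeeping: the $2L+1$ width-$5$ layers followed by the single width-$4$ layer form exactly the architecture $(1,5,\dots,5,4,1)$ with $2L+2$ hidden layers, and the number of nonzero weights is bounded by the total number of parameters of this architecture, namely $5+50L+20+4+10L+5+4=60L+38$.

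The only genuinely delicate point is fitting this doubly-exponential reach into exactly $2L+2$ layers: that is what forces the two recursions $Y_k=4\,Y_{k-1}C_{k-1}$ and $C_k=(C_{k-1}+1)^2$, exploiting the ``free'' factor $4$ in the ReQU product identity $4ab=(a+b)^2-(a-b)^2$ and — crucially — letting the final width-$4$ layer perform a full multiplication step rather than a mere copy of $Y_{2L+1}$; a more naive scheme (build a large constant, carry $x$ separately, multiply once at the very end) falls one squaring short.
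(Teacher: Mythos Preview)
Your proof is correct and takes a genuinely different route from the paper's. The paper runs two \emph{independent} tracks through the $2L{+}1$ width-$5$ layers --- a width-$1$ track building a large constant by repeated squaring $x\mapsto 1\mapsto 4\mapsto 4^2\mapsto 4^4\mapsto\cdots$, and a width-$4$ track carrying $x$ unchanged via the identity --- and then uses the final width-$4$ layer for a single multiplication. Your interleaved scheme instead multiplies the running value $Y_k$ by the running constant $C_k$ at every layer, so that the amplification compounds. What this buys you is exactly the extra squaring you flag in your last paragraph: the paper's modular construction, as written, actually falls one step short, since two of the $2L{+}1$ constant-track layers are consumed getting from $x$ to $1$ to $4$, leaving only $2L{-}1$ further squarings and hence only $4^{2^{2L-1}}$ rather than $4^{4^L}=4^{2^{2L}}$ at the input to the product layer (for $L=1$ three width-$5$ layers deliver $16$, not $256$). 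Your coupled recursion $Y_k=4Y_{k-1}C_{k-1}$, $C_k=(C_{k-1}+1)^2$ genuinely reaches $\Gamma\ge 4^{4^L}$ within the stated depth, so your argument not only succeeds but quietly repairs this off-by-one; the price is a less modular construction and a non-round $\Gamma$, which is harmless since only the ratio $M/\Gamma$ ever appears as a weight.
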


\begin{proof}
    Using the representation $4 = (1 + 1)^2$,
    we obtain that there is a neural network from $\NN((2L + 1, (1, 1, \dots, 1) \big)$, implementing the sequence of maps
    \[
        x \mapsto 1 \mapsto 4 \mapsto 4^2 \mapsto 4^4 \mapsto \dots \mapsto 4^{4^L}.
    \]
    At the same time, due to
    \[
        x
        = \frac14 \left( (x - 1)^2 - (x + 1)^2 \right)
        = \frac14 \left( (x - 1)_+^2 - (1 - x)_+^2 - (x + 1)_+^2 + (-x - 1)_+^2 \right),
    \]
    the identity map $x \mapsto x$ belongs to
    \[
        \NN((2L + 1, (1, \underbrace{4, 4, \dots, 4}_{\text{$2L + 1$ times}}, 1) \big).
    \]
    Running the two neural networks in parallel, we can implement the mapping
    \[
        x \mapsto \left(4^{4^L}, x \right)
    \]
    via a neural network from $\NN\big( 2L + 1, (1, 5, 5, \dots, 5, 2)\big)$. Finally, applying \Cref{lem:requ_prod}, we conclude that there is a neural network from the class
    \[
        \NN\big( 2L+2, (1, 5, 5, \dots, 5, 4, 1) \big),
    \]
    performing the map $x \mapsto 4^{4^L} x$. The number of non-zero weights in this network does not exceed
    \[
        \left( 1 \cdot 5 + 5 \cdot 5 \cdot 2L + 5 \cdot 4 + 4 \cdot 1 \right) + 5 \cdot (2L + 1) + 4 = 60 L + 38.
    \]
    It remains to note that, by the definition of $L$, we have $|M| / 4^{4^L} \leq 1$.
    
\end{proof}

\subsection{Proof of \Cref{co:analyt_exp_bound}}

Let $s > \ell$ be an integer to be specified later. Plugging the bound $\left\|f\right\|_{\H^s([0, 1]^d)} \leq Q R^{-s} s!$, $s \in \mathbb N_0$, into \eqref{eq:approx_H_l_norm}, we get that there is a neural network $h_f$ of width
\[
    \left(4d (K + s - 1)^d \right) \vee \left( 12(K + 2s - 2) + 12\right) \vee p
\]
with
\[
    6 + 2(s - 3) + \left\lceil \log_{2}{d} \right\rceil + 2\left( \left\lceil \log_2 (2d(s - 1) + d) \vee \log_2 \log_2 (Q R^{-s} s!) \right\rceil \vee 1 \right)
\]
hidden layers and at most 
\begin{align*}
    &
    p (K + s - 1)^{d} \left( 60 \left[ \left\lceil \log_2 (2d(s - 1) + d) \vee \log_2 \log_2 (Q R^{-s} s!) \right\rceil \vee 1 \right] + 38 \right)
	\\&
	+ p (K + s - 1)^{d} \left( 20 d^2 + 144 d(s-1) + 8 d \right)
\end{align*}
non-zero weights, taking their values in $[-1, 1]$, such that, for any $s > \ell$,
\begin{align*}
    \left\|f - h_f\right\|_{\H^\ell([0, 1]^d)}
	&
	\leq  \frac{ Q (\sqrt{2}e d)^s s!}{R^s K^{s-\ell}} + \frac{Q 9^{d(s - 2)} (2s - 1)^{2d + \ell} (\sqrt{2}ed)^s s!}{R^s K^{s - \ell}}
	\\&
	\leq \frac{Q 9^{ds} (2s - 1)^{2d + \ell} (\sqrt{2}ed)^s s!}{R^s K^{s - \ell}}.
\end{align*}
Recall that, by the definition, $\lfloor s\rfloor = s - 1$ for any integer $s$. Using the inequalities
\[
    s! \leq (s - \ell)! \, s^\ell,
    \quad
    (s - \ell)! \leq e \sqrt{s - \ell} \left( \frac{s - \ell}e \right)^{s - \ell},
    \quad \text{and} \quad
    \sqrt{s - \ell} \leq 2^{(s - \ell) / 2},
\]
which are valid for all $s > \ell$, we obtain that
\begin{equation}
\label{eq:analyt_bound_pre-final}
\begin{split}    
    \left\|f - h_f\right\|_{\H^\ell([0, 1]^d)}
	&
	\leq \frac{Q e (\sqrt{2} \, e d \, 9^d)^\ell (2s - 1)^{2d + \ell} s^\ell}{R^\ell} \cdot \left( \frac{2d \, 9^d (s - \ell)}{K R} \right)^{s - \ell}
	\\&
	\leq \frac{Q e (\sqrt{2} \, e d \, 9^d)^\ell (2s - 1)^{2d + 2\ell}}{R^\ell} \cdot \left( \frac{2d \, 9^d (s - \ell)}{K R} \right)^{s - \ell}.
\end{split}
\end{equation}
Set 
\begin{equation}
\label{eq:s_choice}
    s = \ell + \left\lfloor \frac{KR}{2ed \, 9^{d}} \right\rfloor.
\end{equation}
It is easy to observe that, with such a choice of $s$, the neural network $h_f$ is of the width $O((K + \ell)^d)$ and has $O((K + \ell))$ hidden layers and at most $O( p (K + \ell)^{d} \log K)$ non-zero weights. The bound \eqref{eq:analyt_bound} on the approximation error of $h_f$ follows directly from \eqref{eq:analyt_bound_pre-final} and \eqref{eq:s_choice}.

\bibliography{references}

\begin{thebibliography}{10}
\expandafter\ifx\csname url\endcsname\relax
  \def\url#1{\texttt{#1}}\fi
\expandafter\ifx\csname urlprefix\endcsname\relax\def\urlprefix{URL }\fi
\expandafter\ifx\csname href\endcsname\relax
  \def\href#1#2{#2} \def\path#1{#1}\fi

\bibitem{lecun15}
Y.~LeCun, Y.~Bengio, G.~Hinton, Deep learning, Nature 521~(7553) (2015)
  436--444.
\newblock \href {http://dx.doi.org/10.1038/nature14539}
  {\path{doi:10.1038/nature14539}}.

\bibitem{goodfellow14}
I.~Goodfellow, J.~Pouget-Abadie, M.~Mirza, B.~Xu, D.~Warde-Farley, S.~Ozair,
  A.~Courville, Y.~Bengio,
  \href{https://proceedings.neurips.cc/paper/2014/file/5ca3e9b122f61f8f06494c97b1afccf3-Paper.pdf}{Generative
  adversarial nets}, in: Advances in neural information processing systems,
  2014, pp. 2672--2680.
\newline\urlprefix\url{https://proceedings.neurips.cc/paper/2014/file/5ca3e9b122f61f8f06494c97b1afccf3-Paper.pdf}

\bibitem{mnih15}
V.~Mnih, K.~Kavukcuoglu, D.~Silver, A.~A. Rusu, J.~Veness, M.~G. Bellemare,
  A.~Graves, M.~Riedmiller, A.~K. Fidjeland, G.~Ostrovski, S.~Petersen,
  C.~Beattie, A.~Sadik, I.~Antonoglou, H.~King, D.~Kumaran, D.~Wierstra,
  S.~Legg, D.~Hassabis, Human-level control through deep reinforcement
  learning, Nature 518~(7540) (2015) 529--533.
\newblock \href {http://dx.doi.org/10.1038/nature14236}
  {\path{doi:10.1038/nature14236}}.

\bibitem{han18}
J.~Han, A.~Jentzen, W.~E, Solving high-dimensional partial differential
  equations using deep learning, Proceedings of the National Academy of
  Sciences 115~(34) (2018) 8505--8510.
\newblock \href {http://dx.doi.org/10.1073/pnas.1718942115}
  {\path{doi:10.1073/pnas.1718942115}}.

\bibitem{geist21}
M.~Geist, P.~Petersen, M.~Raslan, R.~Schneider, G.~Kutyniok, Numerical solution
  of the parametric diffusion equation by deep neural networks, Journal of
  Scientific Computing 88~(1) (2021) 1--37.
\newblock \href {http://dx.doi.org/10.1007/s10915-021-01532-w}
  {\path{doi:10.1007/s10915-021-01532-w}}.

\bibitem{chen19}
Y.~Chen, Y.~Shi, B.~Zhang,
  \href{https://openreview.net/pdf?id=H1MW72AcK7}{Optimal control via neural
  networks: A convex approach}, in: International Conference on Learning
  Representations, 2019.
\newline\urlprefix\url{https://openreview.net/pdf?id=H1MW72AcK7}

\bibitem{onken22}
D.~Onken, L.~Nurbekyan, X.~Li, S.~W. Fung, S.~Osher, L.~Ruthotto, A neural
  network approach for high-dimensional optimal control applied to multiagent
  path finding, IEEE Transactions on Control Systems Technology (2022)
  1--17\href {http://dx.doi.org/10.1109/TCST.2022.3172872}
  {\path{doi:10.1109/TCST.2022.3172872}}.

\bibitem{li18}
L.~Li, K.~Ota, M.~Dong, Humanlike driving: Empirical decision-making system for
  autonomous vehicles, IEEE Transactions on Vehicular Technology 67~(8) (2018)
  6814--6823.
\newblock \href {http://dx.doi.org/10.1109/TVT.2018.2822762}
  {\path{doi:10.1109/TVT.2018.2822762}}.

\bibitem{cembrano94}
G.~Cembrano, C.~Torras, G.~Wells,
  \href{https://www.sciencedirect.com/science/article/pii/0066413894900590}{Neural
  networks for robot control}, Annual Review in Automatic Programming 19 (1994)
  159--166.
\newblock \href
  {http://dx.doi.org/https://doi.org/10.1016/0066-4138(94)90059-0}
  {\path{doi:https://doi.org/10.1016/0066-4138(94)90059-0}}.
\newline\urlprefix\url{https://www.sciencedirect.com/science/article/pii/0066413894900590}

\bibitem{bozek17}
P.~Bozek, Y.~L. Karavaev, A.~A. Ardentov, K.~S. Yefremov, Neural network
  control of a wheeled mobile robot based on optimal trajectories,
  International Journal of Advanced Robotic Systems 17~(2) (2020)
  1729881420916077.
\newblock \href {http://dx.doi.org/10.1177/1729881420916077}
  {\path{doi:10.1177/1729881420916077}}.

\bibitem{gonzalez22}
M.~Gonz\'alez-\'Alvarez, J.~Dupeyroux, F.~Corradi, G.~C. de~Croon, Evolved
  neuromorphic radar-based altitude controller for an autonomous open-source
  blimp, 2022 International Conference on Robotics and Automation (ICRA) (2022)
  85--90.

\bibitem{funahashi89}
K.-I. Funahashi, On the approximate realization of continuous mappings by
  neural networks, Neural networks 2~(3) (1989) 183--192.
\newblock \href {http://dx.doi.org/10.1016/0893-6080(89)90003-8}
  {\path{doi:10.1016/0893-6080(89)90003-8}}.

\bibitem{cybenko89}
G.~Cybenko, Approximation by superpositions of a sigmoidal function,
  Mathematics of control, signals and systems 2~(4) (1989) 303--314.
\newblock \href {http://dx.doi.org/10.1007/BF02551274}
  {\path{doi:10.1007/BF02551274}}.

\bibitem{hornik91}
K.~Hornik, Approximation capabilities of multilayer feedforward networks,
  Neural networks 4~(2) (1991) 251--257.
\newblock \href {http://dx.doi.org/10.1016/0893-6080(91)90009-T}
  {\path{doi:10.1016/0893-6080(91)90009-T}}.

\bibitem{yarotsky17}
D.~Yarotsky, Error bounds for approximations with deep {R}e{LU} networks,
  Neural Networks 94 (2017) 103--114.
\newblock \href {http://dx.doi.org/10.1016/j.neunet.2017.07.002}
  {\path{doi:10.1016/j.neunet.2017.07.002}}.

\bibitem{li20b}
B.~Li, S.~Tang, H.~Yu, Power{N}et: efficient representations of polynomials and
  smooth functions by deep neural networks with rectified power units, Journal
  of Mathematical Study 53~(2) (2020) 159--191.
\newblock \href {http://dx.doi.org/10.4208/jms.v53n2.20.03}
  {\path{doi:10.4208/jms.v53n2.20.03}}.

\bibitem{guhring21}
I.~Gühring, M.~Raslan, Approximation rates for neural networks with encodable
  weights in smoothness spaces, Neural Networks 134 (2021) 107--130.
\newblock \href {http://dx.doi.org/10.1016/j.neunet.2020.11.010}
  {\path{doi:10.1016/j.neunet.2020.11.010}}.

\bibitem{lu21}
J.~Lu, Z.~Shen, H.~Yang, S.~Zhang, Deep network approximation for smooth
  functions, SIAM Journal on Mathematical Analysis 53~(5) (2021) 5465--5506.
\newblock \href {http://dx.doi.org/10.1137/20M134695X}
  {\path{doi:10.1137/20M134695X}}.

\bibitem{shen21}
Z.~Shen, H.~Yang, S.~Zhang, Neural network approximation: Three hidden layers
  are enough, Neural Networks 141 (2021) 160--173.
\newblock \href {http://dx.doi.org/10.1016/j.neunet.2021.04.011}
  {\path{doi:10.1016/j.neunet.2021.04.011}}.

\bibitem{deryck21}
T.~{De Ryck}, S.~Lanthaler, S.~Mishra, On the approximation of functions by
  tanh neural networks, Neural Networks 143 (2021) 732--750.
\newblock \href {http://dx.doi.org/10.1016/j.neunet.2021.08.015}
  {\path{doi:10.1016/j.neunet.2021.08.015}}.

\bibitem{jiao21}
Y.~Jiao, Y.~Lai, X.~Lu, F.~Wang, J.~Z. Yang, Y.~Yang, Deep neural networks with
  relu-sine-exponential activations break curse of dimensionality on {H}\"older
  class, preprint, arXiv:2103.00542 (2021).
\newblock \href {http://dx.doi.org/10.48550/ARXIV.2103.00542}
  {\path{doi:10.48550/ARXIV.2103.00542}}.

\bibitem{langer21}
S.~Langer, Approximating smooth functions by deep neural networks with sigmoid
  activation function, Journal of Multivariate Analysis 182 (2021) Paper No.
  104696, 21.
\newblock \href {http://dx.doi.org/10.1016/j.jmva.2020.104696}
  {\path{doi:10.1016/j.jmva.2020.104696}}.

\bibitem{shen20}
Z.~Shen, H.~Yang, S.~Zhang, Deep network approximation characterized by number
  of neurons, Communications in Computational Physics 28~(5) (2020) 1768--1811.
\newblock \href {http://dx.doi.org/10.4208/cicp.oa-2020-0149}
  {\path{doi:10.4208/cicp.oa-2020-0149}}.

\bibitem{schmidt-hieber20}
J.~Schmidt-Hieber, Nonparametric regression using deep neural networks with
  {R}e{LU} activation function, The Annals of Statistics 48~(4) (2020)
  1875--1897.
\newblock \href {http://dx.doi.org/10.1214/19-AOS1875}
  {\path{doi:10.1214/19-AOS1875}}.

\bibitem{li20a}
B.~Li, S.~Tang, H.~Yu, Better approximations of high dimensional smooth
  functions by deep neural networks with rectified power units, Communications
  in Computational Physics 27~(2) (2020) 379--411.
\newblock \href {http://dx.doi.org/10.4208/cicp.oa-2019-0168}
  {\path{doi:10.4208/cicp.oa-2019-0168}}.

\bibitem{hanin19}
B.~Hanin, Universal function approximation by deep neural nets with bounded
  width and {R}e{LU} activations, Mathematics 7~(10).
\newblock \href {http://dx.doi.org/10.3390/math7100992}
  {\path{doi:10.3390/math7100992}}.

\bibitem{kidger20}
P.~Kidger, T.~Lyons,
  \href{https://proceedings.mlr.press/v125/kidger20a.html}{Universal
  approximation with deep narrow networks}, in: Proceedings of Thirty Third
  Conference on Learning Theory, Vol. 125 of Proceedings of Machine Learning
  Research, 2020, pp. 2306--2327.
\newline\urlprefix\url{https://proceedings.mlr.press/v125/kidger20a.html}

\bibitem{park21}
S.~Park, C.~Yun, J.~Lee, J.~Shin,
  \href{https://openreview.net/forum?id=O-XJwyoIF-k}{Minimum width for
  universal approximation}, in: International Conference on Learning
  Representations, 2021.
\newline\urlprefix\url{https://openreview.net/forum?id=O-XJwyoIF-k}

\bibitem{guhring20}
I.~G\"{u}hring, G.~Kutyniok, P.~Petersen, Error bounds for approximations with
  deep {R}e{LU} neural networks in {$W^{s,p}$} norms, Analysis and Applications
  18~(5) (2020) 803--859.
\newblock \href {http://dx.doi.org/10.1142/S0219530519410021}
  {\path{doi:10.1142/S0219530519410021}}.

\bibitem{klusowski18}
J.~M. Klusowski, A.~R. Barron, Approximation by combinations of {R}e{LU} and
  squared {R}e{LU} ridge functions with {$\ell^1$} and {$\ell^0$} controls,
  IEEE Transactions on Information Theory 64~(12) (2018) 7649--7656.
\newblock \href {http://dx.doi.org/10.1109/tit.2018.2874447}
  {\path{doi:10.1109/tit.2018.2874447}}.

\bibitem{ali21}
M.~Ali, A.~Nouy, Approximation of smoothness classes by deep rectifier
  networks, SIAM Journal on Numerical Analysis 59~(6) (2021) 3032--3051.
\newblock \href {http://dx.doi.org/10.1137/20M1360657}
  {\path{doi:10.1137/20M1360657}}.

\bibitem{abdeljawad22}
A.~Abdeljawad, P.~Grohs, Approximations with deep neural networks in {S}obolev
  time-space, Analysis and Applications 20~(3) (2022) 499--541.
\newblock \href {http://dx.doi.org/10.1142/S0219530522500014}
  {\path{doi:10.1142/S0219530522500014}}.

\bibitem{chen22}
Q.~Chen, W.~Hao, J.~He, Power series expansion neural network, Journal of
  Computational Science 59 (2022) 101552.
\newblock \href {http://dx.doi.org/10.1016/j.jocs.2021.101552}
  {\path{doi:10.1016/j.jocs.2021.101552}}.

\bibitem{gribonval22}
R.~Gribonval, G.~Kutyniok, M.~Nielsen, F.~Voigtlaender, Approximation spaces of
  deep neural networks, Constructive Approximation. An International Journal
  for Approximations and Expansions 55~(1) (2022) 259--367.
\newblock \href {http://dx.doi.org/10.1007/s00365-021-09543-4}
  {\path{doi:10.1007/s00365-021-09543-4}}.

\bibitem{siegel22}
J.~W. Siegel, J.~Xu, High-order approximation rates for shallow neural networks
  with cosine and {${\rm ReLU}^k$} activation functions, Applied and
  Computational Harmonic Analysis 58 (2022) 1--26.
\newblock \href {http://dx.doi.org/10.1016/j.acha.2021.12.005}
  {\path{doi:10.1016/j.acha.2021.12.005}}.

\bibitem{yarotsky20}
D.~Yarotsky, A.~Zhevnerchuk,
  \href{https://proceedings.neurips.cc/paper/2020/file/979a3f14bae523dc5101c52120c535e9-Paper.pdf}{The
  phase diagram of approximation rates for deep neural networks}, in: Advances
  in Neural Information Processing Systems, Vol.~33, 2020, pp. 13005--13015.
\newline\urlprefix\url{https://proceedings.neurips.cc/paper/2020/file/979a3f14bae523dc5101c52120c535e9-Paper.pdf}

\bibitem{xu05}
Z.-B. Xu, F.-L. Cao, Simultaneous ${L}_p$-approximation order for neural
  networks, Neural Networks 18~(7) (2005) 914--923.
\newblock \href
  {http://dx.doi.org/https://doi.org/10.1016/j.neunet.2005.03.013}
  {\path{doi:https://doi.org/10.1016/j.neunet.2005.03.013}}.

\bibitem{hon21}
S.~Hon, H.~Yang, Simultaneous neural network approximation for smooth
  functions, preprint, arXiv:2109.00161 (2021).
\newblock \href {http://dx.doi.org/10.48550/ARXIV.2109.00161}
  {\path{doi:10.48550/ARXIV.2109.00161}}.

\bibitem{candes07}
E.~Cand\`{e}s, L.~Demanet, L.~Ying, Fast computation of fourier integral
  operators, SIAM Journal on Scientific Computing 29~(6) (2007) 2464--2493.
\newblock \href {http://dx.doi.org/10.1137/060671139}
  {\path{doi:10.1137/060671139}}.

\bibitem{candes09}
E.~Cand\`es, L.~Demanet, L.~Ying, A fast butterfly algorithm for the
  computation of {F}ourier integral operators, Multiscale Modeling \&
  Simulation. A SIAM Interdisciplinary Journal 7~(4) (2009) 1727--1750.
\newblock \href {http://dx.doi.org/10.1137/080734339}
  {\path{doi:10.1137/080734339}}.

\bibitem{schumaker07}
L.~Schumaker, Spline Functions: Basic theory, 3rd Edition, Cambridge University
  Press, Cambridge, 2007.
\newblock \href {http://dx.doi.org/10.1017/CBO9780511618994}
  {\path{doi:10.1017/CBO9780511618994}}.

\end{thebibliography}

\appendix

\section{Some properties of multivariate splines}
\label{app:splines}
In this section we provide some properties of multivariate splines. For more details we refer reader to the book \cite{schumaker07}.

\subsection{Univariate splines}
\label{sec:univariate_splines}
We start with one-dimensional case. Fix $K,q \in \nset$. We call a function $S : [0, 1] \mapsto \R$ a univariate spline of degree $q$ with knots at $\{1/K, 2/K, \dots, 
(K-1)/K\}$, if
\begin{itemize}
\item for any $i \in \{0, 1, \dots, K-1\}$, the restriction of $S$ on $[i/K, (i+1)/K]$ is a polynomial of degree $q$;
\item $S$ has continuous derivatives up to order $q-1$, that is, for any $m \in \{0,\dots, q-1\}$ and any $i \in \{1, 2, \dots, K-1\}$, $D^m S(i/K + 0) = D^m S(i/K - 0)$.
\end{itemize}
It is easy to observe that univariate splines of degree $q$ with knots at $\{1/K, 2/K, \dots, 
(K-1)/K\}$ form a linear space.
We denote this space by $\splinespace_{q, K}$. Note that $\splinespace_{q, K}$ has a finite dimension $q + K$. There are several ways to choose a basis in $\splinespace_{q,K}$, below we construct the basis of normalized $B$-splines $\{N_j^{q, K}(x) : 1 \leq j \leq K + q\}$. Let $a = (a_1,\dots,a_{2q + K + 1})$ be a vector of knots defined in \eqref{eq:knots}. First, we successively construct (unnormalized) B-splines $\{B_j^{q, K}(x) : 1 \leq j \leq K + q\}$ associated with knots $a_1, \dots, a_{2q+K+1}$. For any $j \in \{1, \dots, 2q+K\}$, let
\[
B_j^{0, K}(x) =
\begin{cases}
1/(a_{j+1} - a_j), \quad \text{if $a_j < a_{j+1}$ and $a_j \leq x < a_{j+1}$},\\
0, \quad \text{otherwise}.
\end{cases}
\]
Then for any $m \in \{1,\dots,q\}$, $j \in \{1,2q+K-m\}$, we put
\begin{equation}
\label{eq:b-spline_recursion}
B_j^{m, K}(x) =
\begin{cases}
    \frac{(x - a_j)B_j^{m-1, K}(x) + (a_{j+m+1} - x)B_{j+1}^{m-1, K}(x)}{a_{j+m+1} - 
	a_j},\\
	\qquad \qquad \qquad \qquad \qquad \text{if $a_j < a_{j+m+1}, a_j \leq x < a_{j+m+1}$},\\
    0, \quad \text{otherwise}.
\end{cases}
\end{equation}
Now, for $m \in \{1,\dots,q\}$, we define the normalized $B$-spline $N_j^{m, K}$ as
\begin{equation}
\label{eq:norm_b-spline}
N_j^{m, K}(x) = (a_{j+m+1} - a_j) B_j^{m, K}(x), \quad 1 \leq j \leq 2q + K - m.
\end{equation}
We use the following properties of normalized $B$-splines (see \cite[Section 4.3]{schumaker07} for the details).

\begin{Prop}[\cite{schumaker07}, Section 4.3]
	\label{prop:b-spline}
	Let $N_j^{m, K}$ be a normalized B-spline defined in \eqref{eq:norm_b-spline}. Then the following holds.
	\begin{itemize}
		\item For any $q \in \nset_{0}, j \in \{1, \dots, q+K\}$, 
		\[
		    \text{\rm supp}(N_j^{q, K}) = [a_j, a_{j+q+1}]
		    \quad \text{and} \quad
		    \|N_j^{q, K}\|_{L_\infty([0,1])} \leq 1.
		\]
		\item For any $q \in \mathbb N$ and any $j \in \{1, \dots, q+K\}$, $N_j^{q, K}$ is a 
		spline of degree $q$, it has continuous derivatives up to order $q-1$ and 
		the $(q-1)$-th derivative is Lipschitz.
		Moreover, for any $m \in \{1, \dots, q\}$,
		\[
		\frac{\rmd N^{m, K}_j(x + 0)}{\rmd x} = \frac{m}{a_{j+m} - a_j} \left( 
		N_j^{m-1, K}(x) - N_{j+1}^{m-1, K}(x) \right),
		\quad 1 \leq j \leq 2q+K-m.
		\]
	\end{itemize}
\end{Prop}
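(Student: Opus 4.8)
The proposition collects classical facts about univariate $B$-splines on the clamped knot vector \eqref{eq:knots}; a complete account is \cite[Chapter 4]{schumaker07}, and my plan is to reprove what is needed by a single induction on the degree $m$, using only the recursion \eqref{eq:b-spline_recursion} and the normalization \eqref{eq:norm_b-spline}.

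First I would settle the support claim. At $m=0$ the function $B_j^{0,K}$ is supported on $[a_j,a_{j+1}]$ by definition. For the induction step, suppose $\operatorname{supp}(B_i^{m-1,K})\subseteq[a_i,a_{i+m}]$ for every admissible $i$. In \eqref{eq:b-spline_recursion} the coefficient $(x-a_j)/(a_{j+m+1}-a_j)$ multiplies $B_j^{m-1,K}$, which is supported on $[a_j,a_{j+m}]$, and vanishes at $x=a_j$; the coefficient $(a_{j+m+1}-x)/(a_{j+m+1}-a_j)$ multiplies $B_{j+1}^{m-1,K}$, which is supported on $[a_{j+1},a_{j+m+1}]$, and vanishes at $x=a_{j+m+1}$. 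Hence $B_j^{m,K}$ vanishes outside $[a_j,a_{j+m+1}]$, and since \eqref{eq:norm_b-spline} only rescales, $\operatorname{supp}(N_j^{q,K})=[a_j,a_{j+q+1}]$ (once one also checks that $N_j^{q,K}$ is not identically zero on this interval). Along the same induction I would carry two companion facts: nonnegativity of $B_j^{m,K}$, which is immediate because on $[a_j,a_{j+m+1}]$ both coefficients in \eqref{eq:b-spline_recursion} lie in $[0,1]$, and the partition-of-unity identity $\sum_j N_j^{m,K}(x)=1$ for $x\in(0,1)$, obtained by inserting \eqref{eq:b-spline_recursion} into $\sum_j(a_{j+m+1}-a_j)B_j^{m,K}(x)$ and telescoping after a reindexing. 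Combining nonnegativity with the partition of unity gives $0\leq N_j^{q,K}(x)\leq 1$, i.e. $\|N_j^{q,K}\|_{L_\infty([0,1])}\leq 1$.

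For the second bullet I would run the induction on $m$ again. The function $B_j^{0,K}$ is piecewise constant, and if every $B_i^{m-1,K}$ is piecewise polynomial of degree $m-1$ then \eqref{eq:b-spline_recursion} shows $B_j^{m,K}$ is piecewise polynomial of degree $m$ with breakpoints only at knots; the normalization preserves this. To upgrade the smoothness, differentiate \eqref{eq:b-spline_recursion} at a point away from the knots, substitute the inductive derivative identity for $B^{m-1,K}$, and simplify using $(a_{j+m+1}-a_j)B_j^{m,K}=N_j^{m,K}$ together with elementary algebra on the linear coefficients; this reproduces exactly the derivative identity asserted in the proposition. Its right-hand side is, by the inductive hypothesis, a spline of degree $m-1$ which is globally $C^{m-2}$, so the one-sided derivatives of $N_j^{m,K}$ match at every knot and $N_j^{m,K}\in C^{m-1}$. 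Iterating the identity $q-1$ times writes $D^{q-1}N_j^{q,K}$ as a finite linear combination of the degree-one splines $N_i^{1,K}$, which are continuous and piecewise linear, hence Lipschitz; taking $m=q$ yields the claim.

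The one point that genuinely needs care is the bookkeeping at the coincident boundary knots $a_1=\dots=a_{q+1}=0$ and $a_{q+K+1}=\dots=a_{2q+K+1}=1$: there some of the denominators $a_{j+m+1}-a_j$ and $a_{j+m}-a_j$ degenerate, the corresponding terms being $0$ by convention, and one must verify that the support statement, nonnegativity, the partition of unity, the derivative identity, and the fact that $1\leq j\leq q+K$ indexes a basis of $\splinespace_{q,K}$ (consistently with $\dim\splinespace_{q,K}=q+K$) all survive under this convention. This is precisely the content worked out in \cite[Section 4.3]{schumaker07}, which I would cite for these routine but slightly fiddly verifications; I expect this boundary bookkeeping, rather than any of the interior computations, to be the only real obstacle.
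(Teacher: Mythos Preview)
The paper does not actually prove this proposition; it is stated with attribution to \cite[Section~4.3]{schumaker07} and left without argument. Your outline is the standard textbook proof (induction on the degree via \eqref{eq:b-spline_recursion}, nonnegativity plus partition of unity for the $L_\infty$ bound, and differentiation of the recursion for the derivative identity and smoothness), which is exactly what one finds in the cited reference, so there is nothing to compare and your sketch is correct.
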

\begin{Co}
\label{co:b-spline_deriv}
For any $m \in \mathbb N$ and any $\ell \in \{1, \dots, m\}$, 
\[
\max\limits_{1 \leq j \leq 2q + K - m} \left\| D^\ell N^{m, K}_j \right\|_{L_\infty([0,1])} \leq (2K)^\ell \frac{m!}{(m-\ell)!}.
\]
\end{Co}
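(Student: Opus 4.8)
The plan is to induct on the differentiation order $\ell$, using the derivative recursion for normalized $B$-splines from the second bullet of \Cref{prop:b-spline} together with the uniform bound $\|N_j^{m,K}\|_{L_\infty([0,1])} \le 1$ and the elementary observation that the knots $a_1,\dots,a_{2q+K+1}$ all lie in $\{0, 1/K, \dots, (K-1)/K, 1\}$, so that whenever $a_{j+m} > a_j$ one has $a_{j+m} - a_j \ge 1/K$, and hence $m/(a_{j+m}-a_j) \le mK$ (when $a_{j+m}=a_j$ the spline $N_j^{m,K}$ vanishes identically on the relevant interval and the bound is trivial).

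First I would record the base case $\ell = 1$: applying the recursion
\[
    \frac{\rmd N^{m, K}_j(x + 0)}{\rmd x} = \frac{m}{a_{j+m} - a_j}\left( N_j^{m-1, K}(x) - N_{j+1}^{m-1, K}(x) \right)
\]
and the bound $\|N_\cdot^{m-1,K}\|_{L_\infty} \le 1$ (valid for $m-1 \in \nset_0$) gives $\|D N_j^{m,K}\|_{L_\infty} \le 2Km = (2K)^1 \, m!/(m-1)!$ for every $m \ge 1$. For the inductive step, I would assume the claimed inequality holds for a given $\ell$ and every order $m \ge \ell$, and then, for any $m \ge \ell + 1$, differentiate the recursion $\ell$ further times: since $N_j^{m-1,K}$ is $C^{m-2}$ with Lipschitz $(m-2)$-th derivative, the right derivative formula can be iterated, yielding
\[
    D^{\ell+1} N_j^{m, K} = \frac{m}{a_{j+m} - a_j}\left( D^\ell N_j^{m-1, K} - D^\ell N_{j+1}^{m-1, K} \right)
\]
(as an identity a.e., which suffices for the $L_\infty$-norm). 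Because $m - 1 \ge \ell$, the inductive hypothesis applies to the two terms on the right, giving
\[
    \left\| D^{\ell+1} N_j^{m, K} \right\|_{L_\infty} \le mK \cdot 2 (2K)^\ell \frac{(m-1)!}{(m-1-\ell)!} = (2K)^{\ell+1} \frac{m!}{(m-\ell-1)!},
\]
which is exactly the claim for $\ell + 1$. Taking the maximum over $j \in \{1,\dots,2q+K-m\}$ throughout finishes the argument.

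I do not expect any serious obstacle here; the only point requiring a little care is the interpretation of $D^\ell N_j^{m,K}$ when $\ell = m$, since then the $m$-th derivative is only piecewise constant. This is handled by reading all derivative identities as holding Lebesgue-a.e. (equivalently, separately on each sub-interval $[i/K,(i+1)/K]$), which is precisely what the $L_\infty$-norm sees, so the recursion and the induction go through verbatim.
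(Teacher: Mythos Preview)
Your proposal is correct and follows essentially the same approach as the paper: both apply the derivative recursion from \Cref{prop:b-spline}, use $a_{j+m}-a_j \ge 1/K$ to bound $m/(a_{j+m}-a_j)$ by $mK$, pick up a factor $2$ from the triangle inequality on the difference $N_j^{m-1,K} - N_{j+1}^{m-1,K}$, and induct on $\ell$ (simultaneously lowering $m$ by one) down to the base case $\|N_j^{\cdot,K}\|_{L_\infty}\le 1$. Your write-up is slightly more detailed in handling the $\ell=m$ edge case, but the argument is the same.
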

\begin{proof}
\Cref{prop:b-spline} implies that, for any $\ell \in \{1, \dots, m\}$,
\begin{align*}
&
\max\limits_{1 \leq j \leq 2q + K - m} \left\| D^\ell 
N^{m, K}_j \right\|_{L_\infty([0,1])}
\\&
= \max\limits_{1 \leq j \leq 2q + K - m} \frac{m}{a_{j+m} - a_j} 
\left\|D^{\ell-1} N_j^{m-1, K} - D^{\ell-1} N_{j+1}^{m-1, K} \right\|_{L_\infty([0,1])}
\\&
\leq 2mK \max\limits_{1 \leq j \leq 2q + K - m + 1} \left\| D^{\ell-1} N_j^{m-1, K} 
\right\|_{L_\infty([0,1])}.
\end{align*}
Now the statement follows from the induction in $\ell$ together with $\|N_j^{q, K}\|_{L_\infty([0,1])} \leq 1$. 
\end{proof}

To study approximation properties of $\splinespace_{q, K}$ with respect to the H\"older 
norm $\|\cdot\|_{\H^k}, k \in \nset_{0}$, we follow the 
technique from \cite[Sections 4.6 and 12.3]{schumaker07} and introduce a dual basis for normalized B-splines. A set of linear functionals $\{\lambda^{q, K}_1, \dots, \lambda^{q, K}_{q+K}\}$ on $L_\infty([0,1])$ is called a dual basis if, for all $i, j \in \{1, \dots, q+K\}$,
\[
\lambda_i^{q, K} N_j^{q, K} = 
\begin{cases}
1, \quad \text{if $i = j$,}\\
0, \quad \text{otherwise}.
\end{cases}
\]
An explicit expression for $\lambda_i^{q, K}$ can be found in \cite[Eq.(4.89)]{schumaker07} but it is not necessary for our purposes. Define a linear operator $\mathcal Q^{q, K} : L_\infty([0, 1]) \rightarrow \mathcal 
S_{q, K}$ by 
\[
\mathcal Q^{q, K} f = \sum\limits_{j = 1}^{q + K} (\lambda_j^{q, K} f) N_j^{q, K}.
\]
The function $\mathcal Q^{q, K} f$ is called quasi-interpolant and it nicely approximates 
$f$ provided that $f$ is smooth enough. The approximation properties of $\mathcal Q^{q, K} f$ are studied below for the case of multivariate tensor-product splines.

\subsection{Tensor-product splines} The main result of this section is \Cref{thm:spline_approx} concerning 
approximation properties of multivariate splines.
We start with auxiliary definitions.
The tensor product
\[
\splinespace^{q, K}_d = \left(\splinespace^{q, K}\right)^{\otimes d}
\equiv \text{span}\left\{ N_{j_1}^{q, K}(x_1)\cdot \dots \cdot N_{j_d}^{q, K}(x_d) : j_1, \dots, 
j_d \in \{1, \dots, q + K\} \right\}
\]
is called a space of tensor-product splines of degree $q$.
Since $q$ and $K$ are fixed, we omit the upper indices in the notations $B_j^{q, K}, N_j^{q, K}$, $1 \leq j \leq q + K$, if there is no ambiguity. For any $i \in \{1, \dots, d\}$, we denote $\{\lambda_{i, 1}, \dots, \lambda_{i, q+K}\}$ the 
dual basis for $\{N_1(x_i), \dots, N_{q+K}(x_i)\}$ constructed in \ref{sec:univariate_splines}. For any $j_1, \dots, j_d \in \{1, \dots, q + K\}$ and $f \in L_\infty([0, 1]^d)$, define
\[
\L_{j_1, \dots, j_d}f = \lambda_{1, j_1} \lambda_{2, j_2} \dots \lambda_{d, j_d}f.
\]
It is easy to see that $\left\{\L_{j_1, \dots, j_d} : j_1, \dots, j_d \in \{1, \dots, q + K\} \right\}$ form a dual basis for \newline $\left\{ N_{j_1}(x_1), \ldots, N_{j_d}(x_d)\right\}$. Now we formulate the approximation result from \cite[Theorem 12.5]{schumaker07}.
\begin{Prop}[\cite{schumaker07}, Theorem 12.5]
\label{thm:coef_bound}
Let $f \in L_\infty([0, 1]^d)$. Fix any $j_1, \dots, j_d \in \{1, \dots, q + K\}$ and let $\L_{j_1, \dots, j_d}$ be as defined above. Introduce
\[
A_{j_1, \dots, j_d} = \bigotimes\limits_{i=1}^d [ a_{j_i}, a_{j_i + q + 1} )\,.
\]
Then
\[
|\L_{j_1, \dots, j_d} f| \leq (2q + 1)^d 9^{d(q - 1)} \|f\|_{L_\infty(A_{j_1, \dots, j_d})}\,.
\]
\end{Prop}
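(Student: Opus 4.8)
The plan is to reduce the $d$-dimensional estimate to the one-dimensional bound for the univariate dual functionals $\lambda_j^{q,K}$ and then tensorize. The single genuinely nontrivial ingredient is the univariate inequality
\[
    |\lambda_j^{q, K} g| \leq (2q + 1)\, 9^{q - 1}\, \|g\|_{L_\infty([a_j,\, a_{j + q + 1}])},
    \qquad g \in L_\infty([0, 1]),
\]
which follows from the explicit construction of the dual basis in \cite[Section~4.6]{schumaker07}: each $\lambda_j^{q,K}$ can be realised as integration of $g$ against a locally supported kernel $\psi_j$ with $\operatorname{supp}(\psi_j) \subseteq [a_j, a_{j+q+1}]$, and one estimates $\|\psi_j\|_{L_1([0,1])} \leq (2q+1)9^{q-1}$ (equivalently, $\lambda_j^{q,K}$ pairs $g$ with a local $L_2$-projection onto polynomials of degree $q$, whose defining dual polynomials are bounded off their knot interval by $9^{q-1}$, up to a normalisation of order $2q+1$). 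I would take this estimate as given; its \emph{locality} — the support being exactly $[a_j, a_{j+q+1}]$ — is what eventually yields the product set $A_{j_1,\dots,j_d}$ in the final bound rather than all of $[0,1]^d$.

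Given the univariate bound, I would peel the functionals off one coordinate at a time. Using the kernel representation, for fixed $(x_2,\dots,x_d)$ the inner functional gives
\[
    \lambda_{1, j_1} f(\cdot, x_2, \dots, x_d) = \int_0^1 f(x_1, x_2, \dots, x_d)\, \psi_{j_1}(x_1)\, \rmd x_1,
\]
which, by Fubini's theorem, is a bounded measurable function of $(x_2,\dots,x_d)$ and hence admits the action of $\lambda_{2,j_2}$ in the variable $x_2$, and so on. Iterating $d$ times produces the integral representation
\[
    \L_{j_1, \dots, j_d} f = \int_{[0,1]^d} f(x)\, \prod_{i=1}^d \psi_{j_i}(x_i)\, \rmd x,
\]
and then, since $\prod_{i} \psi_{j_i}(x_i)$ vanishes off $A_{j_1,\dots,j_d} = \bigotimes_{i=1}^d [a_{j_i}, a_{j_i+q+1})$,
\[
    |\L_{j_1, \dots, j_d} f| \leq \|f\|_{L_\infty(A_{j_1,\dots,j_d})} \prod_{i=1}^d \|\psi_{j_i}\|_{L_1([0,1])} \leq (2q+1)^d\, 9^{d(q-1)}\, \|f\|_{L_\infty(A_{j_1,\dots,j_d})},
\]
which is exactly the claim.

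The main obstacle is purely the univariate constant: one must trace through Schumaker's dual-basis construction to obtain $\|\psi_j\|_{L_1} \leq (2q+1)9^{q-1}$ with the stated numerical factors — the $9^{q-1}$ growth comes from a Chebyshev/Markov estimate controlling the dual polynomials away from the knot interval on which they are defined, and the $2q+1$ from the normalisation of the knot spacings and the number of coefficients. Everything else is routine: Fubini, the measurability and boundedness of the partially evaluated slices, and collapsing the nested essential suprema over the $d$ intervals into a single one over their product. If one prefers to avoid the kernel representation, the identical conclusion follows by applying the univariate norm bound directly and iteratively to the slices $x_i \mapsto \lambda_{i+1, j_{i+1}} \cdots \lambda_{d, j_d} f$, checking at each step that the result is still a bounded measurable function of the remaining variables.
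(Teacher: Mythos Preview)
The paper does not supply its own proof of this proposition: it is quoted verbatim as \cite[Theorem~12.5]{schumaker07} and used as a black box. Your sketch --- taking the univariate bound $|\lambda_j^{q,K} g| \leq (2q+1)\,9^{q-1}\,\|g\|_{L_\infty([a_j,a_{j+q+1}])}$ from \cite[Section~4.6]{schumaker07} as given and then tensorizing via Fubini or iterated application to slices --- is correct and is in fact exactly how Schumaker obtains the multivariate statement from the univariate one; there is nothing to compare against in the present paper.
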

Similarly to $\mathcal Q^{q, K}$, for any $f \in L_\infty([0, 1]^d)$, define a multivariate 
quasi-interpolant
\[
\mathcal Q_d^{q, K} f(x) = \sum\limits_{j_1, \dots, j_d = 1}^{q + K} (\L_{j_1, \dots, j_d} 
f) N_{j_1}(x_1) \dots N_{j_d}(x_d).
\]
Similarly to $N_j$'s, we omit the upper indices $q, K$ in the notation of $\mathcal Q_d$ when they are clear from context. We are ready to formulate the main result of this section.

\begin{Thm}
\label{thm:spline_approx}
Let $f \in \H^\beta([0, 1]^d, H)$ and fix a non-negative integer $\ell \leq \lbeta$. Let the linear operator $\mathcal Q_d^{\lbeta, K} : L_\infty([0,1]^d) \rightarrow \splinespace_d^{\lbeta, K}$ be as defined above. Then
\[
\left\|f - \mathcal Q_d^{\lbeta, K} f \right\|_{\H^\ell([0, 1]^d)} \leq  \frac{(\sqrt{2}e d)^{\beta} H}{K^{\beta-\ell}} + \frac{9^{d(\lbeta - 1)} (2\lbeta + 1)^{2d + \ell} (\sqrt{2}ed)^\beta H}{K^{\beta - \ell}}\,.
\]
\end{Thm}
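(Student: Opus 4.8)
\emph{Strategy.} The plan is to exploit the locality of the quasi-interpolant together with a local Taylor expansion of $f$, following the scheme of \cite[Sections~4.6 and~12.3]{schumaker07}. First I would record two structural facts. Since the functionals $\L_{j_1,\dots,j_d}$ form a dual basis, the operator $\mathcal{Q}_d^{\lbeta,K}$ is a linear projection onto $\splinespace_d^{\lbeta,K}$; as every polynomial whose degree in each variable is at most $\lbeta$ lies in $\splinespace_d^{\lbeta,K}$, we get $\mathcal{Q}_d^{\lbeta,K}P=P$ for all such $P$. Moreover, by \Cref{prop:b-spline} the support of $N_j^{\lbeta,K}$ is an interval of length at most $(\lbeta+1)/K$, so for every $x\in[0,1]^d$ the value $(\mathcal{Q}_d^{\lbeta,K}f)(x)$, and all of its partial derivatives at $x$, depend only on the restriction of $f$ to an axis-parallel cube $\widetilde Q_x\subseteq[0,1]^d$ of side $O(\lbeta/K)$, with at most $(\lbeta+1)^d$ of the product basis functions active at $x$.

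\emph{Reduction to a local estimate.} Fix $x$, let $P=P_x$ be the degree-$\lbeta$ Taylor polynomial of $f$ expanded at a point of $\widetilde Q_x$, and use $\mathcal{Q}_d^{\lbeta,K}P=P$ to write, on $\widetilde Q_x$,
\[
f-\mathcal{Q}_d^{\lbeta,K}f=(f-P)-\mathcal{Q}_d^{\lbeta,K}(f-P).
\]
Hence for any multi-index $\bgamma$ with $|\bgamma|\le\ell$,
\[
\norm{D^{\bgamma}(f-\mathcal{Q}_d^{\lbeta,K}f)}_{L_\infty(\widetilde Q_x)}\le\norm{D^{\bgamma}(f-P)}_{L_\infty(\widetilde Q_x)}+\norm{D^{\bgamma}\mathcal{Q}_d^{\lbeta,K}(f-P)}_{L_\infty(\widetilde Q_x)}.
\]
For the first term I would use a quantitative multivariate Taylor theorem: for $|\bgamma|\le\lbeta$, $D^{\bgamma}P$ is the degree-$(\lbeta-|\bgamma|)$ Taylor polynomial of $D^{\bgamma}f$, whose remainder of order $\beta-|\bgamma|$ is bounded by $H$ times $\operatorname{diam}(\widetilde Q_x)^{\beta-|\bgamma|}=O\big((\sqrt d\,\lbeta/K)^{\beta-|\bgamma|}\big)$, the combinatorial overhead (the number of multi-indices of a given order and the reciprocal factorials) producing the factor $(\sqrt2\,ed)^{\beta}$; in particular $\norm{f-P}_{L_\infty(\widetilde Q_x)}=O\big(H(\sqrt d\,\lbeta/K)^{\beta}\big)$. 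For the second term I would expand $\mathcal{Q}_d^{\lbeta,K}(f-P)$ into its at most $(\lbeta+1)^d$ active terms, bound each coefficient by $|\L_{\mathbf j}(f-P)|\le(2\lbeta+1)^d9^{d(\lbeta-1)}\norm{f-P}_{L_\infty(\widetilde Q_x)}$ via \Cref{thm:coef_bound}, and bound $D^{\bgamma}\big(\prod_{\ell}N_{j_\ell}(x_\ell)\big)=\prod_{\ell}D^{\gamma_\ell}N_{j_\ell}(x_\ell)$ by \Cref{co:b-spline_deriv}, which contributes $\prod_\ell(2K)^{\gamma_\ell}\lbeta!/(\lbeta-\gamma_\ell)!\le(2K)^{|\bgamma|}\lbeta^{|\bgamma|}$. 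Collecting the powers of $K$ turns $K^{|\bgamma|}/K^{\beta}$ into $K^{-(\beta-\ell)}$ when $|\bgamma|=\ell$, which yields the asserted rate together with the stated $\lbeta$- and $d$-dependent prefactors.

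\emph{From the local bound to the $\H^\ell$-norm.} Taking the supremum over $x$ bounds $\max_{|\bgamma|\le\ell}\norm{D^{\bgamma}(f-\mathcal{Q}_d^{\lbeta,K}f)}_{L_\infty([0,1]^d)}$, which accounts for the $C^{\ell-1}$ part of $\norm{\cdot}_{\H^\ell}$ and for all but the top seminorm. For the remaining seminorm $[D^{\bgamma}(f-\mathcal{Q}_d^{\lbeta,K}f)]_1$ with $|\bgamma|=\ell-1$, I would observe that $\mathcal{Q}_d^{\lbeta,K}f$, being a tensor-product spline of degree $\lbeta\ge\ell$, is globally $C^{\lbeta-1}$ with Lipschitz derivatives of order $\lbeta-1$; hence $D^{\bgamma}(f-\mathcal{Q}_d^{\lbeta,K}f)$ is globally $C^1$ and its Lipschitz constant equals the supremum norm of its gradient, already controlled by the case $|\bgamma|=\ell$. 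Adding the two contributions and simplifying gives the claimed inequality.

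\emph{Main obstacle.} The delicate step is the deterministic Taylor estimate: keeping the explicit constant $(\sqrt2\,ed)^{\beta}$ uniform over all integers $\ell\le\lbeta$ and all (possibly non-integer) $\beta$, and making it valid near the boundary of $[0,1]^d$ --- which is precisely why the knot vector \eqref{eq:knots} carries $(\lbeta+1)$-fold knots at $0$ and $1$, guaranteeing that $\widetilde Q_x$ can be taken inside $[0,1]^d$ so that both the Taylor expansion and \Cref{thm:coef_bound} apply. Everything else is careful bookkeeping of the explicit constants supplied by \Cref{thm:coef_bound} and \Cref{co:b-spline_deriv}.
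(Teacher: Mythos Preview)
Your proposal is correct and follows essentially the same route as the paper's own proof: localize to a box, subtract the Taylor polynomial $P$ (using that $\mathcal Q_d^{\lbeta,K}$ reproduces it), bound $D^{\bgamma}(f-P)$ via the Taylor remainder with the H\"older condition, and bound $D^{\bgamma}\mathcal Q_d^{\lbeta,K}(f-P)$ by combining \Cref{thm:coef_bound} with \Cref{co:b-spline_deriv} over the finitely many active basis functions. The only cosmetic differences are that the paper indexes the local analysis by the knot boxes $A_{j_1,\dots,j_d}$ rather than by an arbitrary point $x$, and that you spell out more carefully how the $L_\infty$ bounds on the $\ell$-th derivatives control the Lipschitz seminorm in $\|\cdot\|_{\H^\ell}$ (the paper leaves this implicit). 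One small bookkeeping point to watch when you carry out the constants: the cube $\widetilde Q_x$ must contain the supports $A_{\mathbf j}$ of \emph{all} active basis functions so that \Cref{thm:coef_bound} applies, which forces side length of order $(2\lbeta+1)/K$ rather than $(\lbeta+1)/K$; this is exactly where the factor $(2\lbeta+1)^{2d}$ (one $(2\lbeta+1)^d$ from the number of active terms and one from the functional bound) in the paper's constant comes from.
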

\begin{proof}
For simplicity, we adopt the notation $q = \lbeta$. Fix a multi-index $\bgamma \in \mathbb N_0^d$, $|\bgamma| \leq \ell$. Then
\[
\|D^\bgamma f - D^\bgamma \mathcal Q_d f\|_{L_\infty([0, 1]^d)}
= \max\limits_{j_1, \dots, j_d \in \{1, \dots, 2q + K + 1\}} \|D^\bgamma f - D^\bgamma\mathcal Q_d f\|_{L_\infty(A_{j_1, \dots, j_d})}\,.
\]
Consider a polynomial of degree $q$
\[
P^f_{j_1, \dots, j_d}(x) = \sum\limits_{\balpha : |\balpha| \leq q} \frac{D^\balpha f(a_{j_1, \dots, j_d})}{\balpha!} (x - a_{j_1, \dots, j_d})^\balpha\,,
\]
where $a_{j_1, \dots, j_d} = (a_{j_1}, \dots, a_{j_d})$.
Here we used conventional notations $\balpha! = \alpha_1! \alpha_2! \dots \alpha_d!$ and	
\[
	(x - a_{j_1, \dots, j_d})^\balpha = (x_1 - a_{j_1})^{\alpha_1} (x_2 - a_{j_2})^{\alpha_2} \dots (x_d - a_{j_d})^{\alpha_d}.
\]
Since $P^f_{j_1, \dots, j_d}$ belongs to $\splinespace_d$, we have $\mathcal Q_d P^f_{j_1, \dots, j_d} = P^f_{j_1, \dots, j_d}$. By the triangle inequality,
\begin{equation}
\label{eq:derivative_bound}
\begin{split}
	\|D^\bgamma f - D^\bgamma \mathcal Q_d f\|_{L_\infty(A_{j_1, \dots, j_d})} 
	&
	\leq \|D^\bgamma (f - P^f_{j_1, \dots, j_d})\|_{L_\infty(A_{j_1, \dots, j_d})} \\
	&\quad
	+ \|D^\bgamma \mathcal Q_d (f - P^f_{j_1, \dots, j_d})\|_{L_\infty(A_{j_1, \dots, j_d})}.
\end{split}
\end{equation}
By Taylor's theorem and the definition of the H\"older class, the first term in the right-hand sight does not exceed
\begin{align*}
	&
	\|D^\bgamma f - D^\bgamma P^f_{j_1, \dots, j_d}\|_{L_\infty(A_{j_1, \dots, j_d})}
	\\&
	= \Bigg\| \sum\limits_{\balpha : |\balpha| = q - |\bgamma|} \frac{q (x - a_{j_1, \dots, j_d})^\balpha}{\balpha!} \int\limits_0^1 (1 - s)^{q-1}
	\\&\quad
	\cdot \left( D^{\balpha + \bgamma} f(sx + (1-s) a_{j_1, \dots, j_d}) - D^{\balpha + \bgamma} f(a_{j_1, \dots, j_d}) \right) \rmd s \Bigg\|_{L_\infty(A_{j_1, \dots, j_d})}
	\\&
	\leq \left\| \sum\limits_{\alpha : |\balpha| = q - |\bgamma|} \frac{q (q/K)^{q - |\bgamma|} H}{\balpha!} 
	\int\limits_{0}^{1} (1 - s)^{q-1} \| x - a_{j_1, \dots, j_d} \|^{\beta - q} \rmd s 
	\right\|_{L_\infty(A_{j_1, \dots, j_d})}
	\\&
	\leq \sum\limits_{\balpha : |\balpha| = q - |\bgamma|} \frac{(q/K)^{q - |\bgamma|} H}{\balpha!} (q\sqrt{d} /K)^{\beta - q}
	\leq \frac{(q d/K)^{\beta-|\bgamma|} H}{(q - |\bgamma|)!}.
\end{align*}
Here we used the fact that
\[
    \sum\limits_{\balpha \in \mathbb N_0^d : |\balpha| = m} \frac{m!}{\balpha!} = d^m \quad \text{for all $m \in \mathbb N_0$.}
\]
Moreover, note that, for any $r \in \{0, 1, \dots, q - 1\}$, it holds that
\[
    \frac{q^{\beta - r}}{(q - r)!}
    = \frac{q}{q - r}\cdot \frac{q^{\beta - r - 1}}{(q - r - 1)!}
    \geq \frac{q^{\beta - r - 1}}{(q - r - 1)!}.
\]
This yields that $q^{\beta - r} / (q - r)!$ achieves its maximum over $r \in \{0, 1, \dots, q\}$ at $r = 0$. Thus,
\[
    \|D^\bgamma f - D^\bgamma P^f_{j_1, \dots, j_d}\|_{L_\infty(A_{j_1, \dots, j_d})}
	\leq \frac{(q d/K)^{\beta-|\bgamma|} H}{(q - |\bgamma|)!}
	\leq \frac{q^\beta (d/K)^{\beta-|\bgamma|} H}{q!}.
\]
Since 
\begin{equation}
    \label{eq:factorial_lower}
    q! \geq \sqrt{2\pi} e^{-q}q^{q+1/2}
\end{equation}
for all $q \in \nset$, we obtain that
\[
    \|D^\bgamma f - D^\bgamma P^f_{j_1, \dots, j_d}\|_{L_\infty(A_{j_1, \dots, j_d})}
	\leq (\sqrt{2}e)^q (d/K)^{\beta-|\bgamma|} H.
\]
Now, we consider the second term in \eqref{eq:derivative_bound}, that is, $D^\bgamma \mathcal Q_d (f - P^f_{j_1, \dots, j_d})$. By the definition of  $\mathcal Q_d$, we have 
\[
	D^\bgamma \mathcal Q_d (f - P^f_{j_1, \dots, j_d})
	= \sum\limits_{i_1, \dots, i_d = 1}^{q + K} \L_{i_1, \dots, i_d} (f - P^f_{j_1, \dots, j_d}) D^\bgamma \left( N_{i_1}(x_1) \dots N_{i_d}(x_d) \right).
\]
The triangle inequality and \Cref{thm:coef_bound} implies that
\begin{align*}
	&
	\|D^\bgamma \mathcal Q_d (f - P^f_{j_1, \dots, j_d})\|_{L_\infty(A_{j_1, \dots, j_d})}
	\\&
	\leq \sum\limits_{i_1, \dots, i_d = 1}^{q + K} |\L_{i_1, \dots, i_d} (f - P^f_{j_1, \dots, j_d})| \left\| D^\bgamma N_{i_1}(x_1) \dots N_{i_d}(x_d) \right\|_{L_\infty(A_{j_1, \dots, j_d})}
	\\&
	\leq (2q + 1)^d 9^{d(q - 1)} \|f - P^f_{j_1, \dots, j_d}\|_{L_\infty(A_{j_1, \dots, j_d})} 
	\\&\quad
	\cdot \sum\limits_{i_1, \dots, i_d = 1}^{q + K}  \left\| D^\bgamma N_{i_1}(x_1) \dots N_{i_d}(x_d) \right\|_{L_\infty(A_{j_1, \dots, j_d})}.
\end{align*}
Again, by Taylor's theorem,
\[
	\|f - P^f_{j_1, \dots, j_d}\|_{L_\infty(A_{j_1, \dots, j_d})}
	\leq \frac{(q d/K)^{\beta} H}{q!}.
\]
By \Cref{prop:b-spline}, for any $m \in \{1, \dots, q+K\}$, $\text{supp}(N_{i_m}) = 
[a_{i_m}, a_{i_{m+q}}]$.
Hence, the intersection of $\text{supp}(N_{i_m})$ with $[a_{j_m}, a_{j_{m+q}}]$ has zero volume if $|i_m - j_m| > q$. This yields
\begin{align*}
	&
	\sum\limits_{i_1, \dots, i_d = 1}^{q + K}  \left\| D^\bgamma N_{i_1}(x_1) \dots 
	N_{i_d}(x_d) \right\|_{L_\infty(A_{j_1, \dots, j_d})}
	\\&
	= \sum\limits_{i_1, \dots, i_d = 1}^{q + K}  \left\| D^\bgamma N_{i_1}(x_1) \dots N_{i_d}(x_d) \right\|_{L_\infty(A_{j_1, \dots, j_d})} \prod_{m=1}^d \1\left( |i_m - j_m| \leq q \right).
\end{align*}
Applying \Cref{co:b-spline_deriv}, we obtain that
\begin{align*}
	&
	\|D^\bgamma \mathcal Q_d (f - P^f_{j_1, \dots, j_d}) \|_{L_\infty(A_{j_1, \dots, j_d})}
	\\&
	\leq \frac{(2q + 1)^d 9^{d(q - 1)} (q d/K)^{\beta} H}{q!}
	\sum\limits_{i_1, \dots, i_d = 1}^{q + K}   
	\prod_{m=1}^d \frac{q! (2K)^{\gamma_m}}{(q - \gamma_m)!} \1\left( |i_m - j_m| \leq q \right)
	\\&
	\leq \frac{(2q + 1)^d 9^{d(q - 1)} (q d/K)^{\beta} H}{q!}
	\sum\limits_{i_1, \dots, i_d = 1}^{q + K}   
	(2Kq)^{|\bgamma|} \prod_{m=1}^d \1\left( |i_m - j_m| \leq q \right).
\end{align*}
The sum in the right hand side contains at most $(2q + 1)^d$ non-zero terms. Hence,
\begin{align*}
	\|D^\bgamma \mathcal Q_d (f - P^f_{j_1, \dots, j_d})\|_{L_\infty(A_{j_1, \dots, j_d})}
	&
	= \frac{(2q + 1)^{2d} 9^{d(q - 1)} (qd/K)^{\beta} (2Kq)^{|\bgamma|} H}{q!}
	\\&
	\leq \frac{9^{d(q - 1)} (2q + 1)^{2d + |\bgamma|} (\sqrt{2}ed)^\beta H}{K^{\beta - |\bgamma|}}\,,
\end{align*}
where we used \eqref{eq:factorial_lower}.

\end{proof}

\end{document}